\def\urltilde{\raise.17ex\hbox{$\scriptstyle\mathtt{\sim}$}}
\newcommand{\be}{\begin{equation}}
\newcommand{\ee}{\end{equation}}
\def\C{\mathbb{C}}
\def\c{\mathbf{C}}
\def\P{\mathbf{P}}
\def\={\!\!\!=\!\!\!\!}
\newcommand\myunderline[1]{\mathbf{#1}}
\def\erf{{\rm erf}}
\newcounter{tldr}
\LetLtxMacro\orgvdots\vdots
\LetLtxMacro\orgddots\ddots
\DeclareRobustCommand\vdots{%
  \mathpalette\@vdots{}%
}
\newcommand*{\@vdots}[2]{%
  \sbox0{$#1\cdotp\cdotp\cdotp\m@th$}%
  \sbox2{$#1.\m@th$}%
  \vbox{%
    \dimen@=\wd0 %
    \advance\dimen@ -3\ht2 %
    \kern.5\dimen@
    \dimen@=\wd2 %
    \advance\dimen@ -\ht2 %
    \dimen2=\wd0 %
    \advance\dimen2 -\dimen@
    \vbox to \dimen2{%
      \offinterlineskip
      \copy2 \vfill\copy2 \vfill\copy2 %
    }%
  }%
}
\DeclareRobustCommand\ddots{%
  \mathinner{%
    \mathpalette\@ddots{}%
    \mkern\thinmuskip
  }%
}
\newcommand*{\@ddots}[2]{%
  \sbox0{$#1\cdotp\cdotp\cdotp\m@th$}%
  \sbox2{$#1.\m@th$}%
  \vbox{%
    \dimen@=\wd0 %
    \advance\dimen@ -\ht2 %
    \kern.05\dimen@
    \dimen@=.5\wd2 %
    \advance\dimen@ -\ht2 %
    \dimen2=.5\wd0 %
    \advance\dimen2 -\dimen@
    \vbox to \dimen2{%
      \offinterlineskip
      \hbox{$#1\mathpunct{.}\m@th$}%
      \vfill
      \hbox{$#1\mathpunct{\kern\wd2}\mathpunct{.}\m@th$}%
      \vfill
      \hbox{$#1\mathpunct{\kern\wd2}\mathpunct{\kern\wd2}\mathpunct{.}\m@th$}%
    }%
  }%
}
\title{An ultraspherical spectral method for linear Fredholm and Volterra integro-differential equations of convolution type}
\shorttitle{A US spectral method for integro-differential equations of convolution type}
\author{\sc Nicholas Hale\thanks{Department of Mathematical Sciences, Stellenbosch University, Stellenbosch, 7602, South Africa.\ (nickhale@sun.ac.za)\newline
This work is based on the research supported by the National Research Foundation of South
Africa (Grant Number 109210).}}
\begin{document}
\maketitle

\begin{abstract}{The Legendre-based ultraspherical spectral method for ordinary differential equations \citep{olver2013} is combined with a formula
for the convolution of two Legendre series \citep{hale2014b} to produce a new technique for solving 
linear Fredholm and Volterra integro-differential equations with convolution-type
kernels. When the kernel and coefficient functions  are sufficiently smooth then the
method is spectrally-accurate and the resulting almost-banded linear systems can be solved with linear complexity.}
{Fredholm, Volterra, convolution, Legendre, ultraspherical, spectral, integro-differential}
\end{abstract}%

%

\section{Introduction}\label{sec:intro}%
\noindent Fredholm and Volterra integral and integro-differential equations (IDEs) arise often in mathematical models of physical phenomena, 
such as renewal theory~\citep{cox1962}, neural networks~\citep{jackiewicz2006},  population dynamics~\citep{kuang1993}, thermodynamics~\citep{maccamy1977}, the spread of disease~\citep{medlock2003}, 
and financial mathematics~\citep{tankov2003}.
This paper concerns the numerical solution of linear Fredholm and Volterra IDEs of the 
form\be\label{eqn:fidect3}
\begin{array}{rcl}
{\cal{L}}^ry(t) &\=& f(t) + g(t)\displaystyle\int_0^T k(t-s)h(s)y(s)\,ds, \qquad 0 \leq t \leq T,
\end{array}%
\ee
and \be\label{eqn:videct3}
\begin{array}{rcl}
{\cal{L}}^ry(t) &\=& f(t) + g(t)\displaystyle\int_0^t k(t-s)h(s)y(s)\,ds, \qquad 0 \leq t \leq T,
\end{array}%
\ee
respectively, where the given functions $g(t)$ and $h(t)$ and the {\em difference} (or {\em convolution}) {kernel} $k$ are smooth,  
and ${\cal{L}}^ry(t) = a_{r}(t)y^{(r)}(t) + a_{r-1}(t)y^{(r-1)}(t) + \ldots + a_0(t)y(t)$
is a linear differential operator with smooth coefficients. 
When $r = 0$ and $g(t) = h(s) = 1$, 
then~(\ref{eqn:videct3}) reduces to the well-known {\em convolution equation}~\cite[Chapter 6]{linz1985}. 
When $r > 0$ then~(\ref{eqn:fidect3}) and~(\ref{eqn:videct3}) are known as Fredholm and Volterra integro-differential equations {\em of convolution 
type} (FIDECTs and VIDECTs), respectively, and are augmented with an auxiliary equation,  
${\cal{B}}y(t) = \gamma$, where ${\cal{B}}$ is a $r\times\infty$ linear functional 
representing initial conditions, boundary conditions, or other constraints. 

In practical applications, solutions to such equations
can not usually be obtained analytically (i.e., in closed form) and one relies on numerical approximation. 
Expanding on ideas introduced by~\citep{elgendi1969}, both~\cite{driscoll2010} and~\cite{tang2008} have recently shown that discretisation of a broad class of integral and integro-differential equations 
by collocation at Chebyshev or Legendre points can yield exponential convergence and high accuracy solutions. However, 
the drawback of global collocation is that such discretisations of the operators result in dense
matrices which are typically time consuming to invert. Furthermore, the matrices representing the differential operator on the 
left-hand side of the equations are ill-conditioned, particularly when the degree of differentiation, $r$, is large~\citep{trefethen1987}.

Olver's \lstinline{ApproxFun.jl} package~\citep{ApproxFun} uses the ultraspherical spectral (US) method \citep{olver2013} for discretisation of
differential operators, resulting in sparse (almost-banded) well-conditioned matrices and spectral convergence when solving linear ODEs. 
When the kernel $k$ in~(\ref{eqn:fidect3}) or~(\ref{eqn:videct3}) is separable, degenerate, or well-approximated 
by a low-rank function (i.e., $k(s,t)\approx\sum_{i=1}^{R}\phi_i(t)\psi_i(s)$ and $R$ is small) then
equations of the form~(\ref{eqn:fidect3})/(\ref{eqn:videct3}) can be solved
by replacing the Fredholm/Volterra operator by a linear combination of multiplication and definite/indefinite integral operators~\citep{slevinsky2016}.
Such an approach is typically referred to as {\em degenerate kernel approximation}~\citep{kress1999}.
However, when $k$ is not of low-rank (i.e., $R$ is large) this becomes computationally expensive.

The approach we take in this work is similar,
but does not rely on the kernel being separable or of low-rank. In particular, we continue to use the US 
method (in this case, with a Legendre basis) for disctretisation of ${\cal{L}}^r$, but to discretise the 
Fredholm and Volterra operators we use the formula introduced in~\citep{hale2014b} for computing the 
convolution of two Legendre series expansions. We demonstrate that when the kernel $k$ is sufficiently smooth
this results in banded linear operators which can be efficiently incorporated into  the US method framework,
obtaining spectral accuracy with linear complexity for many problems.

The outline of this paper is as follows. In Section~\ref{sec:prelim} we recall the US method and describe the algorithm used in~\citep{ApproxFun}
for solving FIDE and VIDEs when the kernel $k$ is of low-rank. In Section~\ref{sec:conveqn} we recall the formulae introduced in~\citep{hale2014b}
for the convolution of two Legendre series and show how this may be combined with US methods to solve FIDECTs and VIDECTs.
Some examples and numerical results are given in Section~\ref{sec:examples} before we conclude in Section~\ref{sec:conc}.

{\bf Remark:} It is worthwhile to note that the method described in this paper is \underline{not} directly applicable 
to fractional integral and differential equations (FIEs and FDEs). Although FIEs take the form of~(\ref{eqn:videct3}), 
the kernel in such cases is not smooth (it has end-point singularities) and hence its Legendre series expansion will
converge too slowly. For an ultraspherical method for  FIEs and FDEs, see~\citep{hale2017}.

{\bf Remark:} We use the following conventions in our notation throughout: Calligraphic fonts (e.g., ${\cal{L}}$
and ${\cal{D}}$) represent continuous linear operators acting on functions. 
Uppercase characters (e.g., $L$ and $D$) represent
(possibly infinite-dimensional) matrices. 
Bold face uppercase math fonts which take an argument (e.g., $\mathbf{P}(x)$) represent {\em quasimatrices}, i.e., 
matrices whose columns are continuous functions.
Underlined variables represent 
(possibly infinite-dimensional) vectors containing coefficients of mapped-Legendre 
series expansions of a function on the interval $[0,T]$.
Indicies for vectors and matrices begin at zero. MATLAB to reproduce all of the numerical results in this 
paper can be found in the Git repository~\citep{hale2017_code}.\\

{\bf Summary:} {We solve convolution-type integro-differential equations by expanding the solution and the kernel in a Legendre basis. The resulting matrices are sparse and the convergence is fast.}%
\section{Preliminaries}\label{sec:prelim}%
\subsection{Legendre polynomials and Legendre series}\label{subsec:legpolys}
The Legendre polynomials $\widehat P_0, \widehat P_1, \ldots$ are orthogonal with respect to the $L^2$ inner-product on the interval $[-1,1]$, 
with the conventional normalisation $||\widehat P_n||^2_2 = 2/(2n+1)$.\footnote{We include the $\ \widehat{} \ $ notation here to distinguish from the
mapped-Legendre polynomials introduced momentarily.}
We denote by $\widehat\P(x)$ the Legendre quasimatrix, whose $j$th column is the degree $j-1$ Legendre polynomial, $\widehat P_{j-1}(x)$, i.e.,
$
\widehat\P(x) = [\widehat P_0(x), \widehat P_1(x), \ldots].
$
Since~(\ref{eqn:fidect3}) and~(\ref{eqn:videct3}) are defined not on the interval $[-1,1]$ but rather on $[0,T]$, 
we introduce the mapped-Legendre polynomials and their associated quasimatrix
\be
P_n(t) = (\widehat P_n\circ\psi_{[0,T]})(t), \qquad \P(t) = [P_0(t), P_1(t), \ldots],
\ee
where $\psi_{[0,T]}(t) = 2t/T-1$ is the linear map from $[0,T]$ to $[-1,1]$. The columns of $\P(t)$ then form a basis for Lipschitz continuous 
functions on $[0, T]$. We associate with the basis $\P(t)$ a space of 
coefficients, $\P\cong\C^\infty$, so that $\underline{v} = (v_0 , v_1 , . . .)^\top \in\P$ if and only if
\be
\sum_{n=0}^\infty|v_n|\sup_{0\le t\le T}|\widehat P_n(t)| =\sum_{n=0}^\infty|v_n|\sup_{-1\le x\le1}|P_n(x)| = \sum_{k=0}^\infty|v_n| < \infty,
\ee
thus if $\underline{v}\in\P$ then $\P(t)\underline{v}$ defines a continuous function on $[0, T]$. If all but a finite number of the $v_n$ are non-zero, 
then $p_N(t) = \P(t)\underline{v}$ is a polynomial of degree $N$, where $v_N$ is the final non-zero coefficient in $\underline{v}$.

Conversely, it follows directly from the orthogonality of the Legendre polynomials that given a continuous function $p(x)$ on $[-1,1]$, 
the best-$L^2$ degree $N$ polynomial approximation of $p$ is obtained by truncation of its Legendre series.
Formally, the $k$th coefficient in the expansion of $p$
is given by
\be\label{eqn:legcoeffs}
v_n = \frac{2n+1}{2}\int_{-1}^{1}\widehat P_n(x)f(x)\,dx,
\ee
but obtaining the coefficients in this manner is computationally expensive, particularly if $N$ is large (see discussion below).
The connection to best-$L^2$ approximation suggests that the approximation of a smooth function by a series 
of Legendre polynomials will converge quickly as the degree of the expansion increases.
For analytic functions, exactly how fast the series converges is made precise by the following result:
\begin{theorem}[\cite{wang2012}]\label{thm:leg}
Denote by $v_N(x)$ the truncated Legendre series expansion of $v(x)$, i.e, 
$
v_N(x) = \sum_{n=0}^{N}v_nP_n(x).
$
If $v(x)$ is analytic inside and on a Bernstein ellipse ${\cal{E}}_\rho$ then for each $N\ge 0$,
\be\max_{x\in[-1,1]}|v(x)-v_N(x)| \le \frac{(2N\rho+3\rho-2N-1)\ell({\cal{E}}_\rho) M}{\pi\rho^{N+1}(\rho-1)^2(1-\rho^{-2})} = {\cal{O}}(\rho^{-N-3}),
\ee
where ${\cal{E}}_\rho = \left\{z\in\mathbb{C} \ : \ z = \frac12(\rho e^{i\theta} + \rho^{-1}e^{-i\theta}), \ \rho \ge 1, \ 0 \le \theta \le 2\pi\right\}$ and $M=\max_{z\in{\cal{E}}_\rho}|v(z)|.$
\end{theorem}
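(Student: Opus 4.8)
The plan is to estimate the truncation error $v(x)-v_N(x) = \sum_{n=N+1}^\infty v_n P_n(x)$ by bounding both the Legendre coefficients $v_n$ and the polynomials $P_n$ on $[-1,1]$, then summing the resulting geometric-type tail. The elementary ingredient is the uniform bound $\max_{x\in[-1,1]}|P_n(x)| = P_n(1) = 1$, which immediately gives $\max_x|v(x)-v_N(x)| \le \sum_{n=N+1}^\infty |v_n|$, so the whole problem reduces to a sharp estimate of $|v_n|$ for analytic $v$.

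For the coefficient bound I would start from the orthogonality formula~\eqref{eqn:legcoeffs}, $v_n = \tfrac{2n+1}{2}\int_{-1}^1 P_n(x)v(x)\,dx$, and deform the contour: since $v$ is analytic inside and on ${\cal E}_\rho$, one can rewrite $v_n$ as a contour integral over ${\cal E}_\rho$ using a suitable kernel. The cleanest route is to use the generating-function / Neumann-series representation of the Legendre expansion, namely that $v_n = \tfrac{2n+1}{2}\cdot\tfrac{1}{2\pi i}\oint_{{\cal E}_\rho} v(z)\, Q_n(z)\,dz$ up to normalisation, where $Q_n$ is the Legendre function of the second kind, together with the classical asymptotic/bound $|Q_n(z)| \lesssim \rho^{-n-1}$ on ${\cal E}_\rho$ (coming from $Q_n(z)\sim c\,u^{-n-1}$ where $z=\tfrac12(u+u^{-1})$ and $|u|=\rho$). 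Plugging in $|v(z)|\le M$ and the arclength $\ell({\cal E}_\rho)$ of the Bernstein ellipse, this yields a bound of the shape $|v_n| \le C(n,\rho)\,\rho^{-n}\,M\,\ell({\cal E}_\rho)$ with an explicit polynomial-in-$n$ prefactor $C(n,\rho)$; the exact constants in the theorem (the $(2n+1)$ factor, the $(\rho-1)^2$ and $(1-\rho^{-2})$ denominators) will fall out of keeping careful track of the $Q_n$ bound and the $\tfrac{2n+1}{2}$ normalisation rather than using crude estimates.

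The final step is to sum the tail: $\sum_{n=N+1}^\infty |v_n| \le M\,\ell({\cal E}_\rho)\sum_{n=N+1}^\infty C(n,\rho)\rho^{-n}$. Because $C(n,\rho)$ grows only linearly in $n$, this is a standard arithmetico-geometric series $\sum_{n>N}(an+b)\rho^{-n}$, which evaluates in closed form to something proportional to $(2N\rho + 3\rho - 2N - 1)\,\rho^{-N-1}/\big((\rho-1)^2(1-\rho^{-2})\big)$ after collecting terms; matching this against the stated expression fixes any remaining constants, and the $\pi$ in the denominator is the leftover from the $\tfrac{1}{2\pi i}$ in the contour integral. The asymptotic statement ${\cal O}(\rho^{-N-3})$ then follows by inspection, since the bracket $(2N\rho+3\rho-2N-1) = 2N(\rho-1) + (3\rho - 1)$ is $\Theta(N)$ while the denominator contributes fixed $\rho$-dependent constants — one cancels one power of $\rho^{-1}$ against the $(\rho-1)$ growth in $N$, leaving effectively $N\rho^{-N-1}\cdot\rho^{-1}\sim\rho^{-N-3}$ up to the slowly-varying factor, which is absorbed into the ${\cal O}$.

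I expect the main obstacle to be obtaining the Legendre function of the second kind bound $|Q_n(z)|\le(\text{explicit})\,\rho^{-n-1}$ uniformly on ${\cal E}_\rho$ with constants sharp enough to reproduce the precise prefactor in the theorem — crude bounds give the right exponential rate but the wrong (or messier) algebraic factor. Everything downstream (the $\max|P_n|=1$ estimate and the arithmetico-geometric summation) is routine bookkeeping; the real content is packaging the contour-integral estimate of $v_n$ so that the stated non-asymptotic constant emerges cleanly. An alternative that sidesteps $Q_n$ entirely is to expand $v$ itself in Chebyshev/Legendre and use the known relation between Legendre coefficients of $v$ and the values of $v$ on the ellipse via the three-term recurrence, but I suspect the $Q_n$ contour argument is what gives the tidiest constants and is likely the route the cited reference~\cite{wang2012} takes.
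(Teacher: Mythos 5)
This theorem is quoted from \cite{wang2012} and the paper supplies no proof of its own, and your sketch follows essentially the same route as the cited proof: Neumann's contour-integral formula $v_n = \frac{2n+1}{2\pi i}\oint_{{\cal E}_\rho} v(z)\,Q_n(z)\,dz$, the classical ellipse bound $|Q_n(z)|\le 2/\bigl(\rho^{n+1}(1-\rho^{-2})\bigr)$, the estimate $\max_{x\in[-1,1]}|P_n(x)|=1$, and the arithmetico-geometric tail sum $\sum_{n>N}(2n+1)\rho^{-n-1}$, which indeed produces the stated prefactor. The only bookkeeping slip is attributing the $(1-\rho^{-2})$ factor to the summation — it comes from the $Q_n$ bound, while the summation contributes the $(\rho-1)^2$ — but this does not affect the correctness of the approach.
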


Note in particular that this convergence is geometric as $N$ is increased. 
\cite{wang2012} provide a similar result for when $v(x)$ is smooth, but not analytic.
In such cases convergence is algebraic in $N$, with the degree roughly the number of continuous derivatives of $v(x)$ on $[-1,1]$.
However, as mentioned above, computing Legendre coefficients from the definition~(\ref{eqn:legcoeffs}) is usually impractical.
Instead, a more efficient paradigm is to first compute approximate Chebyshev coefficients of 
$p$ by interpolating on a Chebyshev grid and applying a discrete cosine transform using the FFT~\cite[Section 4.7]{mason2003}. From these, the corresponding 
approximate Legendre coefficients can be rapidly computed by any of the algorithms presented in~\citep{alpert1991, hale2014a, townsend2016}.
In such a situation it is rather the approximation properties of the Chebyshev interpolant which are required (see, for example, \cite[Chapters 7 \& 8]{trefethen2013}),
but since (a) these are asymptotically very similar to that of the truncated Legendre series and (b) a detailed analysis of 
the convergence of the presented  algorithm is beyond the scope of this paper, we skip the details.


\tldr{Legendre series converge exponentially fast for analytic functions. There are efficient algorithms for approximating the coefficients.}

\subsection{Ultraspherical polynomials and banded operators}\label{sec:uspoly}

The Legendre polynomials are a particular case of the more general ultraspherical (or {\em Gegenbauer}) polynomials, $\widehat C_n^{(\lambda)}(x)$, $\lambda > 0$,
orthogonal on the interval $[-1,1]$ with respect to the weight function $(1-x^2)^{\lambda-1/2}$. As above, one can define a quasimatrix
of mapped ultraspherical polynomials, ${\mathbf{C}}^{(\lambda)}(t)$,  on the interval $[0, T]$ via an affine transformation and associate
with them a space of coefficients ${\mathbf{C}}^{(\lambda)}$.

If $\underline{v}\in \c^{(\lambda)}$ then linear operators which can be applied to $v(t) = \c^{(\lambda)}(t)\underline{v}$ induce 
infinite-dimensional matrices that can be viewed as acting on $\underline{v}$.    
For example, given a continuous linear operator ${\cal{L}}$ satisfying
\be
    {\cal L}C^{(\lambda)}_n(t)= \sum_{j=n-m}^{n+m} L_{jn} C^{(\ell)}_n(t),
\ee
we  can associate it with an $m$-{\em banded} (i.e., banded  with bandwidth $m$) infinite-dimensional matrix
\be\label{eqn:L}
	L := \begin{bsmallmatrix} L_{00} & \cdots & L_{0m} \cr 
			\vdots  & \ddots & L_{1m} & L_{1,m+1}\cr 
			L_{m0} & L_{m1} &  \ddots & L_{mm} & \ddots \cr
					  & L_{m+1,1} & \ddots & \ddots& \ddots \cr
					  && \ddots & \ddots& \ddots
	\end{bsmallmatrix}.
\ee
Since $L$ is banded, multiplication is well-defined on $\C^\infty$ and~(\ref{eqn:L}) can be viewed as an operator $L: \c^{(\lambda)}\rightarrow\c^{(\ell)}$. 
To relate $L$ and ${\cal L}$ we note that (by construction) we have
\be
	 {\cal L}C^{(\lambda)}_n(t)= {\cal L} \c^{(\lambda)}(t) \underline{e}_n= \c^{(\ell)}(t) L \underline{e}_n.
\ee
If $v(t)$ is Lipschtiz continuous on $[0,T]$ then there exists $\myunderline{v}\in\c^{(\lambda)}$ so that $\c^{(\lambda)}(t) \underline{v} = v(t)$ and because ${\cal L}$ is continuous, we have 
\be
 	{\cal L} v = {\cal L} \c^{(\lambda)}(t)\underline{v} = \c^{(\ell)}(t) L \underline{v}.
\ee
Therefore, applying ${\cal{L}}$ to $v(t)$ is equivalent to applying ${L}$ to $\underline{v}$.

Such banded operators form the basis of the ultraspherical spectral method, which we describe below. 
In Section~\ref{sec:conveqn} we will show that Fredholm and Volterra operators also give rise 
to such banded operators when the kernel is sufficiently smooth.

\tldr{Linear operators can be thought of as infinite dimensional matrices acting on spaces of Legendre and ultraspherical polynomial coefficients.}

\subsection{The ultraspherical spectral method}\label{subsec:us} 
Traditional Chebyshev--Galerkin spectral methods  (as introduced by~\cite{lanczos1956} and later developed by~\cite{ortiz1969} and~\cite{orszag1971b,orszag1971a}) 
expand both the unknown solution and the right-hand side of an ordinary differential equation (ODE) as
Chebyshev series of the first kind. One then applies the differential operator 
to the Chebyshev series and equates coefficients (or minimises the residual in some
suitable norm) to obtain the approximate solution. Although resulting in rapidly converging 
approximations (often geometric in the number of degrees of freedom) such an approach typically leads to dense 
and ill-conditioned matrices~\citep{trefethen1987}.

The key observation of the ultraspherical spectral (US) method introduced by~\cite{olver2013} is that if one is willing to map between Chebyshev and higher-order ultraspherical bases (i.e., a Petrov--Galerkin approach)
then differentiation becomes a banded operator. 
After a diagonal preconditioner, the discretization linear systems have a condition number that is bounded by a constant that is independent of the dsicretisation size. This allows a broad
class of ODEs (including some variable coefficient problems) to be solved with spectral 
accuracy and linear complexity, and overcomes the ill-conditioning associated with spectral differentiation matrices. 
Although the US method as described in~\citep{olver2013}
expands the solution as a Chebyshev series of the first kind (which is usually the 
most convenient to work with), 
this is not necessary. Indeed, \cite{hale2017} 
recently used Legendre expansions as part of an ultraspherical 
method for fractional differential equations. 

The Legendre-US method for IDEs requires four banded operators which act on Legendre and related ultraspherical polynomials: differentiation, integration, conversion, and multiplication. 
Derivation of these operators on the interval $[-1,1]$ can be found in~\cite[Section 2]{hale2017}, but for convenience, we give examples of the 
first three acting on $\P$ (i.e., on the interval $[0, T]$) in Table~\ref{tab:ops}. {Multiplication operators, $M_\lambda[f]$, are less easy to write down (since they depend on the coefficients
of the multiplying function), but they are readily generated using the procedure described in~\cite[Section 2.3]{hale2017} and we omit the details here.}
MATLAB code for computing these matrices for a given value of $N$ can be found~\citep{hale2017_code}.

\begin{table}\begin{center}        
{\small \bgroup
\def\arraystretch{1.2}
\setlength\tabcolsep{-.15mm}
\caption{\noindent The banded infinite-dimensional matrices representing differentiation, integration, and conversion operators
when acting on the spaces of mapped-Legendre and ultraspherical coefficients on the interval $[0, T]$. 
One should interpret this table as follows: If $v(t)\in \P(t)$ so that $v(u) = \P\underline{v}$ then, for example, $\frac{d}{dt}v(t) = {\cal{D}}\P(t)\underline{v} = \c^{(3/2)}(t){D_{1/2}}\underline{v}$
and $v(t) = \P(t)\underline{v} = \c^{(3/2)}(t)S_{1/2}\underline{v}$.
These relationships may be used to solve linear differential, 
integral, or integro-differential equations as outlined below (see, for example, equation~(\ref{eqn:usexamplelhs})).
For derivations see~\citep{hale2017} or~\citep{olver2013}.
MATLAB code for computing these matrices can be found in~\citep{hale2017_code}.}
\begin{tabular}{|ccrcc | ccrcc|}\hline
$\begin{array}{c}\text{\tiny Continuous}\\[-.9em] \text{\tiny operator}\end{array}$ & $\begin{array}{c}\text{\tiny Discrete}\\[-.9em]  \text{\tiny Operator}\end{array}$ & $\begin{array}{c}\text{\tiny Matrix}\\[-.9em] \text{\tiny Representation}\end{array}\hspace*{20pt}\mbox{}$ & \tiny Range & \tiny $\,\,\,\,\mbox{}$   Image $\,\,\,\,\mbox{}$ &
$\begin{array}{c}\text{\tiny Continuous}\\[-.9em]  \text{\tiny operator}\end{array}$ & $\begin{array}{c}\text{\tiny Discrete}\\[-.9em]  \text{\tiny Operator}\end{array}$ & $\begin{array}{c}\text{\tiny Matrix}\\[-.9em] \text{\tiny Representation}\end{array}\hspace*{20pt}\mbox{}$ & \tiny Range & \tiny $\,\,\,\,\mbox{}$   Image $\,\,\,\,\mbox{}$ \\\hline&&&&&&&&&\\[-.9em]
  ${\cal{D}} = \frac{d}{dt}$ & $D_{1/2}$ & $\tfrac{2}{T}\begin{bsmallmatrix}0&1&\\&0&1\\&&0&1\\\phantom{-\ddots}&\phantom{-\ddots}&\phantom{-\ddots}&\phantom{-}\ddots&\phantom{-}\ddots\end{bsmallmatrix}\!\ $ & $\P$ & $\c^{(3/2)}$ &
  ${\cal{D}}^2 = \frac{d^2}{dt^2}$ & $D^2_{1/2}$ & $\tfrac{12}{T^2}\begin{bsmallmatrix}0&0&1&\\&0&0&1\\&&0&0&\phantom{-}\ddots\\\phantom{-\ddots}&\phantom{-\ddots}&\phantom{-\ddots}&\phantom{-}\ddots&\phantom{-}\ddots\end{bsmallmatrix}$ & $\P$ & $\c^{(5/2)}$ \\[.55em]
  
  $\int_0^t\cdot\,dt$ & $Q_{1/2}$ & $\tfrac{T}{2}\begin{bsmallmatrix}1&-\frac{1}{3}\\1&0&-\frac{1}{5}\\&\frac{1}{3}&0&-\frac{1}{7}\\\phantom{-\ddots}&\phantom{-\ddots}&\phantom{-}\ddots&\phantom{-}\ddots&\phantom{-}\ddots\end{bsmallmatrix}$ & $\P$ & $\P$ &
  $\int_0^T\cdot\,dt$ & $Q^{\rm def}_{1/2}$ & $T\begin{bsmallmatrix}1&&\\&0\\&&0\\\phantom{\ddots}&\phantom{\ddots}&\phantom{\ddots}&0&\phantom{\ddots}\\\phantom{-\ddots}&\phantom{-\ddots}&\phantom{-\ddots}&\phantom{-\ddots}&\phantom{-}\ddots\end{bsmallmatrix}$ & $\P$ & $\P$\\
  
  ${\cal{I}}$ & $S_{1/2}$ & $\begin{bsmallmatrix}1&0&-\frac{1}{5}\\&\frac{1}{3}&0&-\frac{1}{7}\\&&\frac{1}{5}&0&\phantom{-}\ddots\\\phantom{-\ddots}&\phantom{-\ddots}&\phantom{-\ddots}&\phantom{-}\ddots&\phantom{-}\ddots\end{bsmallmatrix}$ & $\P$ & $\c^{(3/2)}$ &
  ${\cal{I}}$ & $S_{3/2}$ & $3\begin{bsmallmatrix}\frac{1}{3}&0&-\frac{1}{7}\\&\frac{1}{5}&0&-\frac{1}{9}\\&&\frac{1}{7}&0&\phantom{-}\ddots\\\phantom{-\ddots}&\phantom{-\ddots}&\phantom{-\ddots}&\phantom{-}\ddots&\phantom{-}\ddots\end{bsmallmatrix}$ & $\c^{(3/2)}$ & $\c^{(5/2)}$\\\hline
\end{tabular}\egroup
}
\label{tab:ops}
\end{center}\end{table}

Using operators from Table~\ref{tab:ops}, we demonstrate the Legendre-based US method on the first-order IDE
\be\label{eqn:usexample}
y'(t) + ay(t) = e^{t}\int_{0}^te^{-s}y(s)\,ds, \quad 0 \le t \le 1, \qquad y(0) = 1.
\ee
To proceed, we assume that the solution to~(\ref{eqn:usexample}) is sufficiently smooth so that we may consider its 
mapped-Legendre series expansion $y(t) = \P(t)\underline{y}$. Discretisation of the left-hand side of~(\ref{eqn:usexample})
proceeds almost exactly as described in~\cite[Section 2]{olver2013}, but with ${\cal{D}}_0$ and ${\cal{S}}_0$
replaced by $D_{1/2}$ and $S_{1/2}$ from Table~\ref{tab:ops}, respectively, so that 
\be\label{eqn:usexamplelhs}
({\cal{L}}y)(t) := y'(t) + ay(t) = \c^{(3/2)}(t)\underbrace{(D_{1/2} + aS_{1/2})}_{L}\underline{y}.
\ee
Notice in particular that the result is now expressed in the ultraspherical basis, $\c^{(3/2)}$, and that $L$ is banded (since it is a linear combination of two banded matrices).

For the right-hand side we require multiplication operators $M_{1/2}[\cdot]$ and the indefinite integral operator $Q_{1/2}$ so that 
\be\label{eqn:usexamplerhs}
e^{t}\int_{0}^te^{-s}y(s)\,ds = \P(t) \underbrace{M_{1/2}[\underline{e}^{t}]Q_{1/2}M_{1/2}[\underline{e}^{-s}]}_{V}\underline{y} = \c^{(3/2)}(t) S_{1/2}V\underline{y},
\ee
where in the second equality we have multiplied by $S_{1/2}$ to make the bases of~(\ref{eqn:usexamplelhs}) and~(\ref{eqn:usexamplerhs}) compatible.
Therefore~(\ref{eqn:usexample}) is equivalent to
\be\label{eqn:usexample2}
\underbrace{(L - S_{1/2}V)}_A\underline{y} = 0.
\ee
If the coefficients of $e^{t}$ and $e^{-s}$ are truncated at machine precision, then each of the 
operators that make up $A$ are banded, and hence $A$ is banded (with bandwidth around 15 in this example).
The initial condition  $y(0) = 1$ can be expressed as $B\underline{y} := \P(0)\underline{y} = [1, -1, 1, -1, 1, -1,\ldots]\underline{y} = 1$, and hence 
we arrive at the set of linear equations 
\be
      \begin{bmatrix}\ B \ \mbox{} \\ \ L - S_{1/2}V \ \mbox{}
      \end{bmatrix}
      \underline{y} = 
      \begin{bmatrix} 1\\\underline{0}\end{bmatrix}.
\ee
The infinite dimensional matrix on the left-hand side of the above is now {\em almost-banded}, that is, banded with one dense row (see the left panel of Figure~\ref{fig:example5}).
One can solve for the approximate coefficients of the solution by truncating the matrix equation at a suitable size $N$. Using 
a Schur-complement factorization of the 1-1 block, or the Woodbury formula, this can be done in ${\cal{O}}(N)$ operations.
Alternatively, one can employ the adaptive-QR method, which does not truncate but rather `solves the infinite dimensional problem'
until the coefficient of the solution fall below a specified tolerance~\citep{olver2013,olver2014}.
In either case, since the kernel is smooth (in this case entire) fast convergence to the underlying solution is usually observed.

\tldr{The unknown solution is expanded as a Legendre series and the right hand-side as higher-order ultraspherical polynomials.
This leads to a well-conditioned and almost-banded linear system, which can be efficiently solved.}

\subsection{FIDECTS and VIDECTS with low-rank kernels}\label{sec:usfidects}
A bivariate function $k(s,t)$ is {\em separable} (or  {\em degenerate}) if it can be expressed as
\be\label{eqn:rank}
k(s,t) = \sum_{i=1}^{R}\phi_i(t)\psi_i(s), \qquad R < \infty.
\ee
If~(\ref{eqn:rank}) holds then we say that $k$ has {\em rank} at most $R$, 
and if $R$ is small we say that $k$ is of {\em low rank}.
Even if $k$ is not of finite rank, it can often be well-approximated (in some appropriate norm) 
by a finite-rank function so that the equality~(\ref{eqn:rank}) can be replaced by $\approx$.
Adaptive low rank approximations of bivariate functions can be efficiently computed by Chebfun2 in Chebfun~\citep{chebfun2, Chebfun}.

Substituting~(\ref{eqn:rank}) to a Volterra operator yields
\be\label{eqn:lowrankvolt}
\int_{0}^tk(s,t)y(s)\, ds = \sum_{i=1}^{R}\phi_i(t)\int_{0}^t\psi_i(s)y(s)\,ds,
\ee
hence the Volterra operator is equivalent to a linear combination of multiplication and indefinite integral operators~\cite[Chapter 11]{kress1999}.
To discretise the operator using the Legendre-US method, we proceed as above, i.e., expand $y$, the $\phi_o$, and the $\psi_o$ as Legendre series so that~(\ref{eqn:lowrankvolt}) becomes
\be\label{eqn:lowrankvolt2}
\int_{0}^tk(s,t)y(s)\, ds =  \widehat\P(t)\underbrace{\left[\sum_{i=1}^{R}M[\underline{\phi}_i]Q_{1/2}M[\underline{\psi}_i]\right]}_{V}\underline{y}.
\ee
To solve VIDEs of the form 
\be\label{eqn:volt}
({\cal{L}}^{r}y)(t) = f(t) + \int_{0}^tk(s,t)y(s)\, ds,
\ee
subject to ${\cal{B}}y(t) = {\gamma}$,
one forms a Legendre-US discretisations $L$ of the differential operator ${\cal{L}}^r$ and 
$B$ of the functional ${\cal{B}}$,  and then solves
\be
\begin{bmatrix}
 B\\ L - S_{[r]}V
\end{bmatrix}
      \underline{y} = 
\begin{bmatrix}
  {\gamma}\\S_{[r]}\underline{f}
\end{bmatrix}
\ee
where $S_{[r]} = S_{r+1/2}S_{r-1/2}\ldots S_{3/2}S_{1/2}:\P\rightarrow\c^{r+1/2}$. In fact, the VIDE solved in Section~\ref{subsec:us}
is precisely the Volterra equation~(\ref{eqn:volt}) with $({\cal{L}}^ry)(t) = y'(t) + ay(t)$, $f(t) = 0$, the rank-1 kernel $k(s,t) = \exp(-(t-s))$,
and initial condition ${\cal{B}}y = y(0) = 1$. 

Fredholm operators, where the upper limit of integration in~(\ref{eqn:lowrankvolt2})
becomes $T$ rather than $t$, can be represented similarly, with the matrix $Q_{1/2}$ in~(\ref{eqn:lowrankvolt})
replaced by $Q_{1/2}^{\rm def}$ from Table~\ref{tab:ops}. 
The approach can also be extended to 
equations involving combinations of multiple Fredholm and Volterra operators, 
systems of linear integro-differential equations (where the almost-banded 
structure of the operators can be maintained by interlacing the coefficients of the variables),
and partial integro-differential equations.

\tldr{If the kernel is degenerate or well-approximated by a low rank function, then Fredholm and Volterra operators
are simplified to definite and indefinite integrals.}
\section{New method}\label{sec:conveqn}%
One drawback with the algorithm described above is that it requires a separable decomposition of $k(s,t)$. 
Furthermore, if $R$ is large, then constructing the operator $V$ in this manner may be time consuming. 
In this section we propose a modification
to the approach which avoids the need to compute low-rank
approximations when the kernel is of convolution type, i.e., $k(s,t) = k(t-s)$.

\subsection{A banded convolution operator}\label{subsec:convop}
Consider $\underline{k},\underline{y}\in\P$ so that $k(x) = \P(x)\underline{k}$ and $y(x) = \P(x)\underline{y}$ are Lipschitz continuous functions on $[0, T]$. 

\begin{proposition}\label{prob:banded}
Suppose $\underline{k}$ has a finite number of nonzero entries so that $k(t) = \P(t)\underline{k}$ is a polynomial of degree $m$, 
then the convolution operator 
\be
({\cal{V}}[k]y)(t) := \int_{0}^tk(t-s)y(s)\,ds, \qquad 0 \le t \le T, 
\ee
corresponds to an $(m+2)$-banded infinite dimensional matrix $V[\underline{k}]:\P\rightarrow\P, \text{ i.e.},$
\be ({\cal{V}}[k]y)(t) = \P(t)V[\underline{k}]\underline{y}.\ee
We refer to $V[\underline{k}]$ as the {\em discrete Legendre convolution operator}.
\end{proposition}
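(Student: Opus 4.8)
The plan is to rewrite the claimed identity as an operator factorisation built entirely from the banded building blocks of Table~\ref{tab:ops}, turning bandedness into a bookkeeping exercise. First I would differentiate the Volterra integral: writing $u(t) = ({\cal V}[k]y)(t) = \int_0^t k(t-s)y(s)\,ds$, the Leibniz rule (legitimate here since $k$ is a polynomial and $y$ is Lipschitz) gives $u(0) = 0$ and $u'(t) = k(0)y(t) + \int_0^t k'(t-s)y(s)\,ds = k(0)y(t) + ({\cal V}[k']y)(t)$. Since $u(0)=0$, the indefinite integral recovers $u$, so at the operator level ${\cal V}[k] = k(0)\,{\cal Q} + {\cal Q}\,{\cal V}[k']$, where ${\cal Q}$ is discretised by the banded matrix $Q_{1/2}:\P\to\P$. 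Because $\deg k = m$, the $(m{+}1)$st derivative of $k$ vanishes and ${\cal V}[0]=0$, so iterating this recursion terminates after $m+1$ steps.

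Unrolling the recursion (and using that for a constant $c$ one has ${\cal V}[c]y = c\,{\cal Q}y$) yields the closed form
\[
  V[\underline k] \;=\; \sum_{j=0}^{m} k^{(j)}(0)\,Q_{1/2}^{\,j+1},
\]
which is the whole game: each $Q_{1/2}^{\,j+1}$ is a product of $j+1$ banded matrices and hence banded with bandwidth at most $j+1$, so the finite sum is banded with bandwidth at most $m+1 \le m+2$, and it maps $\P$ to $\P$ because $Q_{1/2}$ does (Table~\ref{tab:ops}). Passing from the Taylor coefficients $k^{(j)}(0)$ to the Legendre coefficients $\underline k$ is merely a change of basis and leaves the structure intact, which proves the proposition — in fact with the slightly sharper constant $m+1$. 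The constant $m+2$ is also exactly what falls out of the per-mode convolution formula of~\citep{hale2014b}, which is the route a reader following Section~\ref{sec:conveqn} might find more natural; either way the bound is the same up to the $\pm 1$.

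I expect the one genuinely delicate point to be the \emph{lower} bandedness of $V[\underline k]$, i.e.\ that the entries $V[\underline k]_{lj}$ vanish when $j-l$ is large. The \emph{upper} bandedness ($V[\underline k]_{lj} = 0$ for $l > m+j+1$) is immediate from a degree count, since ${\cal V}[k]P_j$ is a polynomial of degree at most $m+j+1$; this part does not even need the kernel to be of convolution type. The lower bandedness is precisely the property that fails for a general smooth $k$, so it is where the polynomial hypothesis must be used. In the route above it is handled transparently — $Q_{1/2}$ is banded in \emph{both} directions and the recursion terminates — whereas a direct argument in terms of the Legendre coefficients of $\int_0^t P_i(t-s)P_j(s)\,ds$ would have to invoke the recurrence of~\citep{hale2014b} to see that those coefficients vanish below index $\approx |i-j|$. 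The remaining points — differentiation under the integral sign, the identity $u = {\cal Q}u'$, and the fact that finite sums and products of the infinite banded matrices act on $\P\cong\ell^1$ — are routine, the last because the entries of $Q_{1/2}$ are bounded (indeed ${\cal O}(1/n)$).
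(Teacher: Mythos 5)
Your argument is correct, but it takes a genuinely different route from the paper's proof. The paper works entrywise: it expresses $V[\underline{k}]_{j,n}$ as a double integral against $P_j(t)$ and $P_n(s)$, then invokes the recurrence (\ref{eqn:V_recurrence})--(\ref{eqn:V_recurrence3}) from \citep{hale2014b} to show by induction that the columns terminate (lower-bandedness), and handles the other triangle with the scaled-symmetry relation $V[\underline{k}]_{j,n} = (-1)^{j+n}\tfrac{2j+1}{2n+1}V[\underline{k}]_{n,j}$. You instead Taylor-expand the polynomial kernel and use the Leibniz/Cauchy repeated-integration recursion to obtain the factorisation ${\cal{V}}[k]=\sum_{j=0}^{m}k^{(j)}(0)\,{\cal{Q}}^{j+1}$, hence $V[\underline{k}]=\sum_{j=0}^{m}k^{(j)}(0)\,Q_{1/2}^{j+1}$, so that bandedness in \emph{both} directions drops out of the tridiagonality of $Q_{1/2}$ from Table~\ref{tab:ops}. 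This is more elementary and self-contained (no appeal to the recurrence or the symmetry relation), it gives an explicit closed form, and it even sharpens the bound to bandwidth $m+1$ (consistent with $k\equiv 1$, where $V[\underline{k}]=Q_{1/2}$ is tridiagonal); the paper's $m+2$ is simply the slightly looser constant its induction produces, so your result implies the stated one. What the paper's route buys is that the recurrence is also the practical recipe for assembling $V[\underline{k}]$ directly and stably from the Legendre coefficients $\underline{k}$, whereas your factorisation passes through the Taylor coefficients $k^{(j)}(0)$ --- fine for proving existence and bandedness, less suitable as a construction for larger $m$. Two small quibbles, neither a gap: your labels for the two triangles are swapped relative to the usual convention (the degree count $V[\underline{k}]_{l,j}=0$ for $l>m+j+1$ controls the \emph{lower} bandwidth, which is what the paper's induction gives, while the vanishing for $j-l$ large is the \emph{upper} bandwidth, which the paper gets from symmetry); and the aside that only one triangle really uses the hypothesis on $k$ is not quite right --- for \emph{any} polynomial kernel, convolution-type or not, both triangles are banded (write $k(t,s)$ as a finite sum of separable polynomial terms and use the banded $M[\cdot]\,Q_{1/2}\,M[\cdot]$ representation of Section~\ref{sec:usfidects}); what a general smooth non-polynomial $k$ destroys is bandedness in both directions at once.
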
%
\begin{proof}
Since $k(t)$ is Lipschitz continuous on $[0, T]$ 
then ${\cal{V}}[k]$ is a compact operator and therefore there exists $\underline{u}\in\P$ such that $u(t) = \P(t)\underline{u} = ({\cal{V}}[k]y)(t)$.
Modifying the derivation in~\cite[Section 4]{hale2014b} to account for the fact that we consider the interval $[0, T]$, we have from~(\ref{eqn:legcoeffs})
that for $j = 0, 1, \ldots$ the mapped-Legendre coefficients of $u(t)$ satisfy
\begin{eqnarray}
 u_j 
  &=& \frac{2j+1}{T}\int_{0}^T P_j(t)\int_0^tk(t-s)y(s)\,ds\,dt,\\
     &=& \sum_{n=0}^\infty \underbrace{\left[\frac{2j+1}{T}\int_{0}^T P_j(t)\int_0^tk(t-s) P_n(s)\,ds\,dt\right]}_{V[\underline k]_{j,n}}  y_n = [V[k]\underline{y}]_j
\end{eqnarray}
where in the second equality we have used the absolute convergence of the coefficients $\underline{y}$.
Hence we have that $\underline{u} = V[\underline{k}]\underline{y}$. It remains to show that $V[\underline{k}]$ is banded. 

It was shown in~\cite[Theorem 4.1]{hale2014b} that the entries of $V[\underline k]$ satisfy the recurrence relation
\be\label{eqn:V_recurrence}
V[\underline{k}]_{j,n+1} = -\frac{2n+1}{2j+3}V[\underline{k}]_{j+1,n} + \frac{2n+1}{2j-1}V[\underline{k}]_{j-1,n} + V[\underline{k}]_{j,n-1}, \quad n \ge 1,
\ee
with starting values (again, modified to account for the fact we are on the interval $[0,T]$)
\begin{eqnarray}\label{eqn:V_recurrence2}
V[\underline{k}]_{j,0} &=& \left\{\begin{array}{ll}{\tfrac{T}{2}[k_{j-1}}/({2j-1})-{k_{j+1}}/({2j+3})], \hspace*{73pt} \mbox{}& j\not=0,\\[.1em]
\tfrac{T}{2}[k_0-k_1/3],&j = 0,\end{array}\right.\label{eqn:Bk0}\\
V[\underline{k}]_{j,1} &=& \left\{\begin{array}{ll}V[\underline{k}]_{j-1,0}/(2j-1)-V[\underline{k}]_{j,0}-V[\underline{k}]_{j+1,0}/(2j+3),\hspace*{6pt} &  j\not=0,\\[.1em]
-V[\underline{k}]_{1,0}/3,&j = 0,\end{array}\right.\label{eqn:Bk1}\\
V[\underline{k}]_{0,n} &=& \left.\begin{array}{ll}(-1)^nV[\underline{k}]_{n,0}/(2n+1), & \hspace*{127pt} n \ge 0.\end{array}\right.\label{eqn:V_recurrence3}
\end{eqnarray}
Since $k(t)$ is a polynomial of degree $m$, we have $k_j = 0$ for $j > m$ and hence from~(\ref{eqn:Bk0}) that  $V[\underline{k}]_{j,0} = 0$ for $j > m + 1$. Subsequently, from~(\ref{eqn:Bk1}),
we also have that $V[\underline{k}]_{j,1} = 0$ for $j > m + 2$, 
and from the recurrence~(\ref{eqn:V_recurrence}) it follows by induction that $V[\underline{k}]$ is lower-banded with bandwidth $m+2$.
The fact that $V[\underline{k}]$ is also upper-banded, and hence banded, follows from the scaled-symmetry relation~\cite[(4.8)]{hale2014b}:
\be
V[\underline{k}]_{j,n} = (-1)^{n+j}\frac{2j+1}{2n+1}V[\underline{k}]_{n,j} \qquad j, n \ge 0.
\ee
\end{proof}\newpage

\tldr{The convolution operator acting on the space of Legendre coefficients satisfies the recurrence relation~(\ref{eqn:V_recurrence})--(\ref{eqn:V_recurrence3}).
For polynomial kernels this operator is banded (with bandwidth roughly the degree of the polynomial). 
Sufficiently smooth but non-polynomial kernels can be approximated arbitrarily well by a polynomial.}

\subsection{Solving Volterra IDEs}

We are now in a position to solve Volterra equations of the form~(\ref{eqn:videct3}).
Let $f(t)$, $g(t)$, $h(t)$, and $k(t)$ be Lipschitz continuous functions on $[0, T]$. Rearranging~(\ref{eqn:videct3}) and expressing the result in operator notation 
so that ${\cal{A}} = ({\cal{L}}^r - g{\cal{V}}[k]h)$, then we have
\be\label{eqn:conveqnopform}
({\cal{A}}y)(t) = f(t), \qquad 0\le t \le T,
\ee
which is equivalent to
\be\label{eqn:conveqndisc}
 A\underline{y} := \left(L - S_{[r]}M[\underline{g}]V[\underline{k}]M[\underline{h}]\right)\underline{y} = S_{[r]}\underline{f},
\ee
where $L$ is the Legendre-US discretisation of ${\cal{L}}^r$ and $V[\underline{k}]$ is as in Section~\ref{subsec:convop}.

\begin{example}\label{example:5}
Consider the following example from~\citep{ma2006},
 \be\label{eqn:ex5}
 y'(t) + ay(t) = \int_0^te^{-(t-s)}y(s)\,ds, \quad 0\le t \le 1, \quad y(0) = 1,
 \ee
 where $a$ is a given constant. The exact solution is given by 
 \be\label{eqn:ex5sol}
 y(t) = e^{-\frac{a+1}{2}t}\left(\cosh(bt) + \tfrac{1-a}{2b}\sinh(bt)\right),
 \ee
 where $b = \tfrac{1}{2}\sqrt{a^2-2a+5}$.
 Here $g(t) = h(s) = 1$, so we need only compute the Legendre coefficients of $e^{-t}$ on $[0, T]$ in order to construct the 
 Volterra operator as described above. These can be computed in quasi-linear time using recently developed fast algorithms~\citep{alpert1991, hale2014a, townsend2016}, 
 or in linear time by observing that $k(t)$ satisfies $k'-k = 0$, $k(0) = 1$ and solving this simple ODE on $[0, 1]$ using
 the US-Legendre method. In either case, the resulting discretisation is given by
 \be\label{eqn:ex5spy}
 \begin{bmatrix} 
 \begin{bmatrix}
 1,&-1,&1,&-1,&\ldots
 \end{bmatrix}\\
 D_{1/2} + aS_{1/2}(I - V[\underline{e}^{-t}])\end{bmatrix}\underline{y} = \begin{bmatrix}1\\\underline{0}\vphantom{S_{1/2}}\end{bmatrix}.
 \ee
 
 Figure~\ref{fig:example5} shows the discretisation, solution, and convergence of the Legendre US method for this VIDECT when $a = 100.$ 
 The first panel verifies that the operator in~(\ref{eqn:ex5spy}) is almost-banded, 
 with one dense row at the top due to the boundary condition $y(0)=1$, and the bandwidth of the remainder determined by the decay in the Legendre coefficients of the kernel.
 It takes around 13 terms in a Legendre series expansion to represent $e^{-t}$ to machine precision on $[0, 1]$, so here the bandwidth of the operator is around 15 (13+2).
 This drops to around 10 if only 8 digits of precision are required. 
 The centre panel shows a plot of the obtained solution. The approximated Legendre series returned upon solving~(\ref{eqn:conveqndisc})
 is evaluated using Clenshaw's algorithm. Since the solution here has such an rapid initial decay, we show also the logarithm (base-10) of the solution.
 The third panel shows the error (approx infinity norm error at 1000 equally-spaced points, again evaluated using Clenshaw's algorithm) in the computed solution 
 as $N$ is increased. Because the solution $y(t)$
 is entire we observe super-geometric convergence until this plateaus at around $10^{-15}$ 
 due to rounding error in evaluating the computed Legendre series. The well-conditioned discretisation of the differentiation operator 
 means there is no growth in the error even for very large values of $N$.%
\begin{figure}[t]
 \includegraphics[height=105pt, trim={0 15pt 0 0},clip]{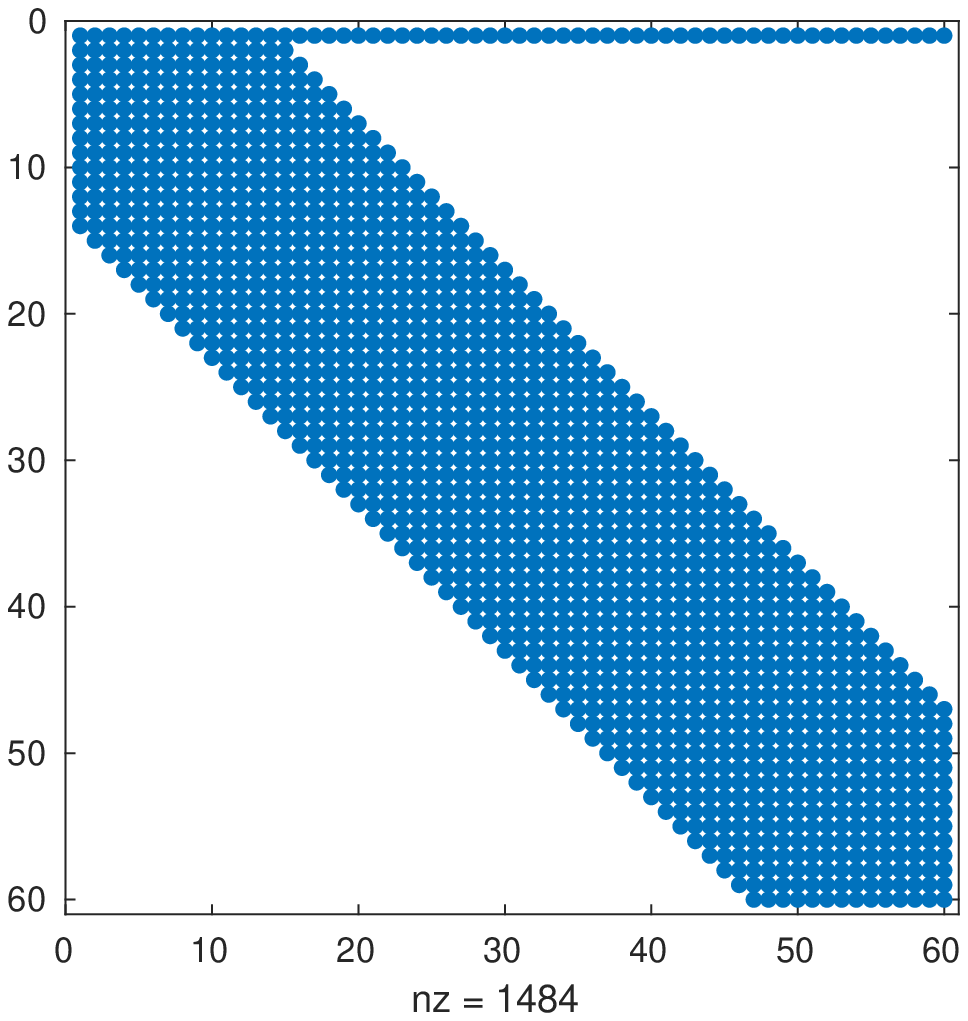}\hfill\hspace*{5pt}
 \includegraphics[height=105pt]{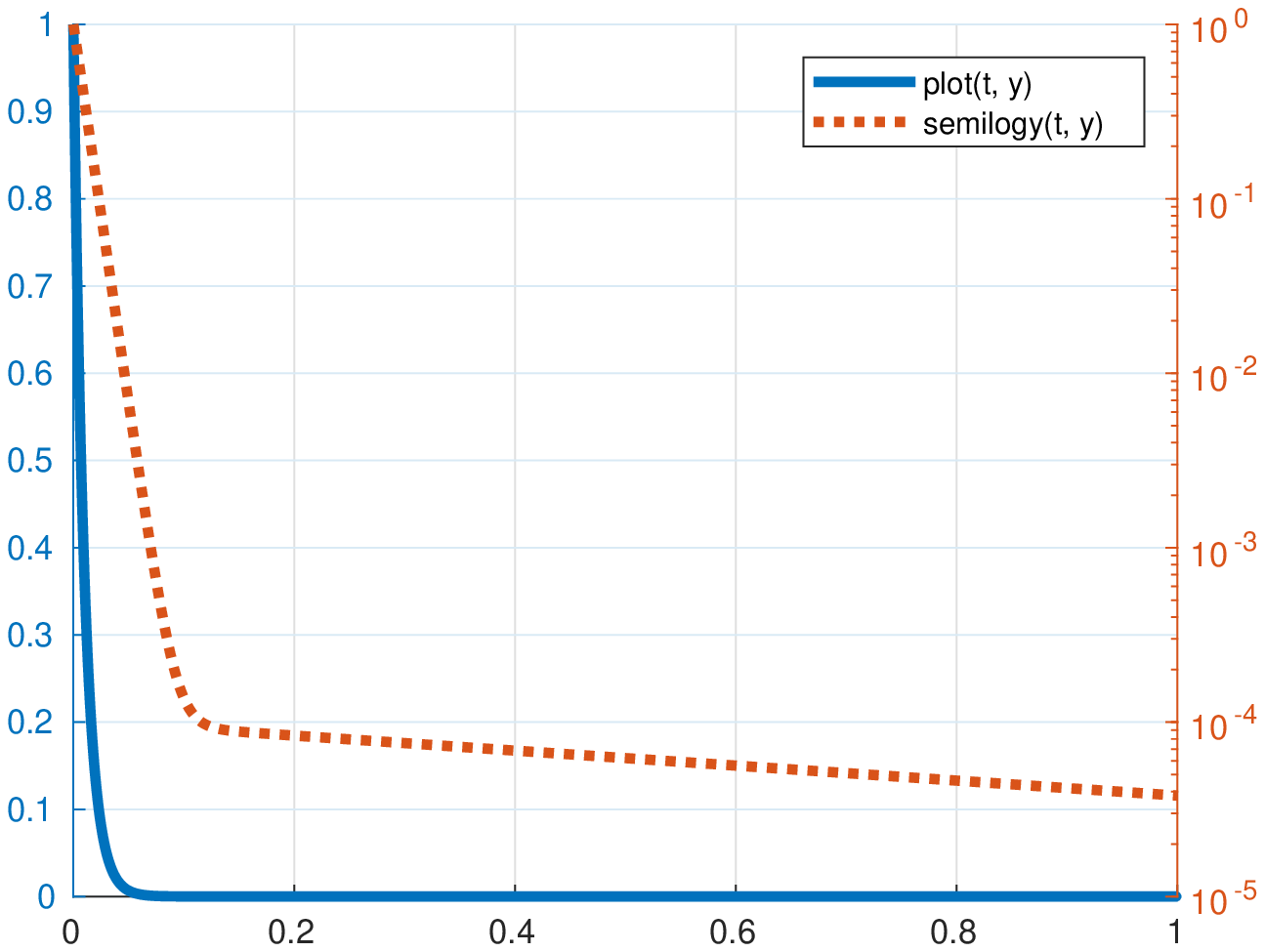}\hfill
 \includegraphics[height=105pt]{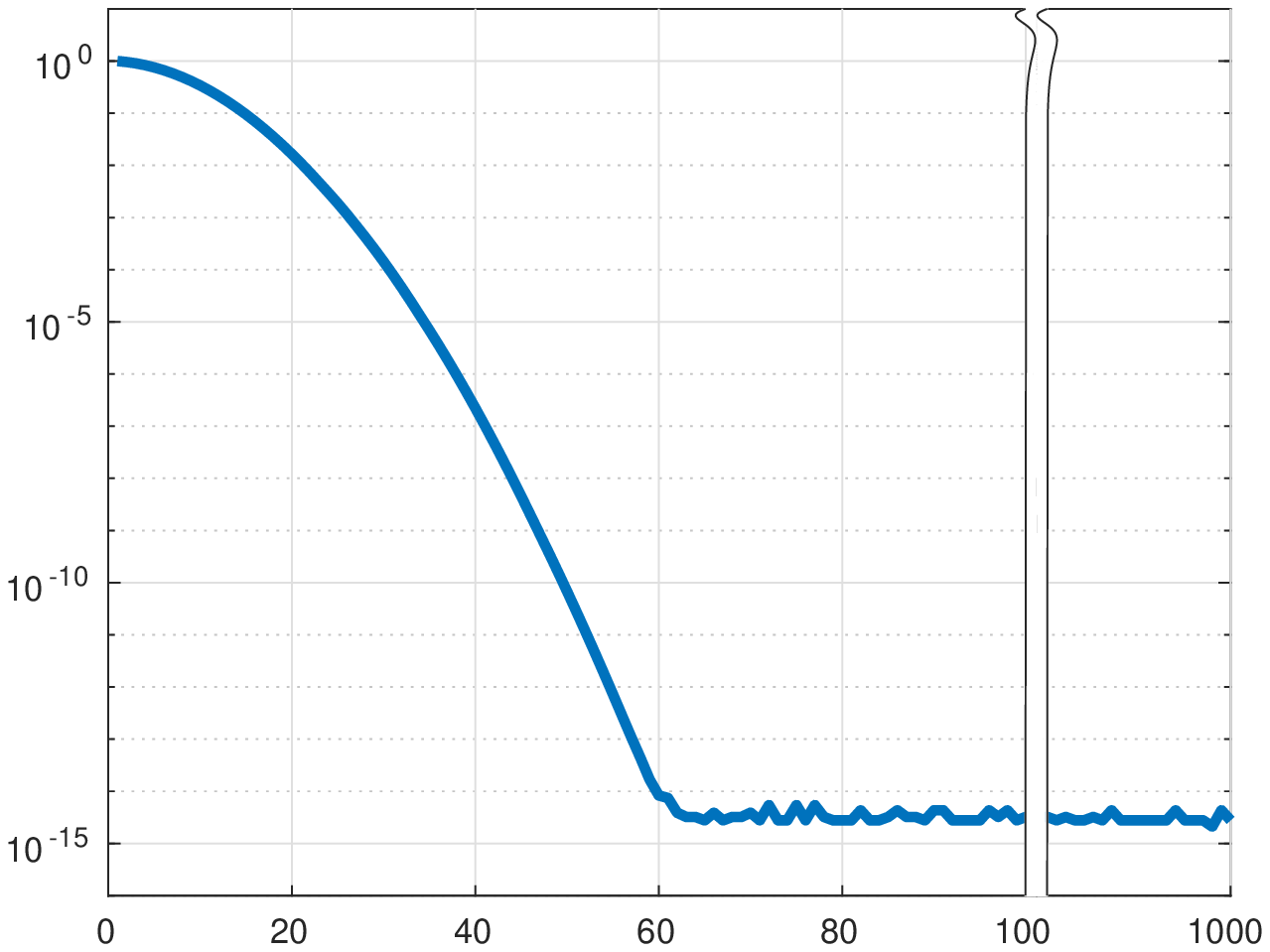}
\put(-77,106){\scriptsize Error}
 \put(-72,-5){\scriptsize $N$}
 \put(-370,106){\scriptsize MATLAB {\tt spy} plot}
 \put(-238,106){\scriptsize Solution to~(\ref{eqn:ex5})}
 \put(-212,-5){\scriptsize $t$}
 \put(-291,48){\scriptsize \rotatebox{90}{$y(t)$}}
 \caption{Left: MATLAB \lstinline{spy} plot of the $N = 60$ discretisation of the operator in~(\ref{eqn:ex5spy}). 
 The operator is `almost-banded' (with one dense row at the top due to the boundary condition) and the 
 bandwidth is determined by the decay in the Legendre coefficients of the kernel.
 Middle: Solution (solid line = linear scale, dashed line = $\log_{10}$ in $y$) of~(\ref{eqn:ex5}) with $a = 100$,
 evaluated using Clenshaw's algorithm. 
 Right: Error (approx infinity norm at 1000 equally-spaced points) for various values of $N$. As the solution $y(t)$
 is entire we observe super-geometric convergence. The well-conditioned discretisation of the differentiation operator 
 means there is no growth in the error even for very large values of $N$.} \label{fig:example5}
 \end{figure}

%

\end{example}

\tldr{Combining the Legendre-US method with the discrete Legendre convolution operator from Section~\ref{subsec:convop} 
allows the solution of VIDECTS. When the kernel is polynomial (or sufficiently well-approximated by a polynomial)
then the resulting discretisations are banded and the method is spectrally accurate.}

\subsection{Solving Fredholm IDEs}
We saw in the previous section how an ultraspherical spectral method for VIDEs might be derived.
However, VIDEs are typically initial value problems, for which local or time-stepping discretisations are more commonly used. 
Fredholm operators, on the other hand, are themselves global in nature, and hence a global spectral discretisation of FIDEs
is more natural. To this end, consider now the basic Fredholm operator with a convolution kernel, which we may write as
\be\label{eqn:fredop}
({\cal{F}}[k]y)(t) := \int_{0}^Tk(t-s)y(s)\, ds = ({\cal{V}}[k]y)(t) + (\widetilde {\cal{V}}[k]y)(t)
\ee
where 
\be
(\widetilde {\cal{V}}[k]y)(t) := \int_t^T k(t-s)y(s)\, ds.
\ee
Making the substitution $\tau = T - t$, $\sigma = T - s$ gives
\be
(\widetilde {\cal{V}}[k]y)(t) = \int_0^{\tau} k(\sigma - \tau)y(T - \sigma)\, d\sigma = \int_0^{\tau} \widetilde k(\tau - \sigma)\widetilde y(\sigma)\, d\sigma = ({\cal{V}}[\widetilde k]\widetilde y)(\tau), \quad 0 \le t,\tau \le T,
\ee
where $\widetilde y(\tau) = y(T-t)$ and $\widetilde k(t) = k(-t)$. Since $\widetilde P_n(-x) = (-1)^n \widetilde P_n(x)$ for Legendre polynomials on $[-1, 1]$, 
we have that $P_n(T-t) = (-1)^n P_n(t)$ for mapped-Legendre polynomials on $[0, T]$, and hence that 
$\widetilde{\underline{y}} = \widetilde I{\underline{y}}$ and ${\underline{y}} = \widetilde I\widetilde{\underline{y}}$
where 
\be
\widetilde I := \begin{bsmallmatrix}1 \\ & -1 \\ && \,\,\,1\\ &&& -1\\ &&&&\ddots\\\end{bsmallmatrix}.
\ee
Therefore, the discrete version of $\widetilde{\cal{V}}[k]$ may be written as
\be
\widetilde V[\underline{k}]\underline{y} = \widetilde I V[\underline{\widetilde k}]\widetilde I \underline{y},
\ee
where $\widetilde{\underline{k}}$ are the Legendre coefficients of $k(-t)$ in $[0, T]$, 
and we may express the Fredholm operator~(\ref{eqn:fredop}) as
\be
({\cal{F}}[k]y)(t) = \P(t)F[\underline{k}, \widetilde{\underline{k}}]\underline{y} := \P(t)\left(V[\underline{k}] + \widetilde I V[\widetilde{\underline{k}}]\widetilde I\right)\underline{y}.
\ee
Like the discrete Volterra operator in the previous section, $F[\underline{k}, \widetilde{\underline{k}}]$ will be banded if $k(t)$ is a polynomial 
(or sufficiently well-approximated by a polynomial on $[-T,0]$ and $[0,T]$) and can be combined with the Legendre-US method to solve Fredholm integro-differential equations. 
The derivation above is readily extended to Fredholm operators with non-constant coefficients, which may be written as
$$
({\cal{F}}[k]y)(t) = g(t)\int_{0}^Tk(t-s)h(s)y(s)\, ds = \P(t)M_{1/2}[\underline{g}]\left(V[\underline{k}] + \widetilde I V[\widetilde{\underline{k}}]\widetilde I\right)M_{1/2}[\underline{h}]\underline{y}.
$$

\begin{example}\label{example:7}
We modify the example from the previous section so that the solution~(\ref{eqn:ex5sol}) to~(\ref{eqn:ex5}) is also the solution to the second-order Fredholm integro-differential equation
 \be\label{eqn:ex7}
 y''(t) + ay'(t) - y(t) = f(t) - \int_0^Te^{-(t-s)}y(s)\,ds, \quad 0\le t \le 1, \quad y(0) = 1, \ \  y(T) = \gamma_T,
 \ee
 where $f(t) = \exp(-t)\big(\exp\big(\tfrac{1-a}{2}T\big)\sinh(bT) - \exp\big(\frac{1-a}{2}t\big)\sinh(bt)\big)/b$, $a$ is a given constant, and $\gamma_T$ is obtained by evaluating~(\ref{eqn:ex5sol}) at $t = T$. 
 Using the approach described above, our discretisation is given by
 \be\label{eqn:ex7spy}
 \begin{bmatrix}
 \begin{bmatrix}
 1,&-1,&1,&-1,&1&-1,&1&-1,&\ldots\\
 1,&1,&1,&1,&1,&1,&1,&1,&\ldots
 \end{bmatrix}\\
 D^2_{1/2} + S_{3/2}(aD_{1/2} + S_{1/2}(-I+F[\underline{e}^{-t}, \underline{e}^{t}]))\end{bmatrix}\underline{y} = \begin{bmatrix}\begin{bmatrix}1\\\gamma_T\end{bmatrix}\\S_{3/2}S_{1/2}\underline{f}\end{bmatrix}.
 \ee
 Figure~\ref{fig:example7} shows the resulting discretisation, solution, and convergence when $a = 100$. 
 There is very little difference to the corresponding results for~(\ref{eqn:ex5}),
 other than that the two boundary conditions now give rise to two dense rows at the top of the matrix.
 As before, the discretisation is almost-banded and the convergence super-geometric.%
\begin{figure}[t]
 \includegraphics[height=105pt, trim={0 15pt 0 0},clip]{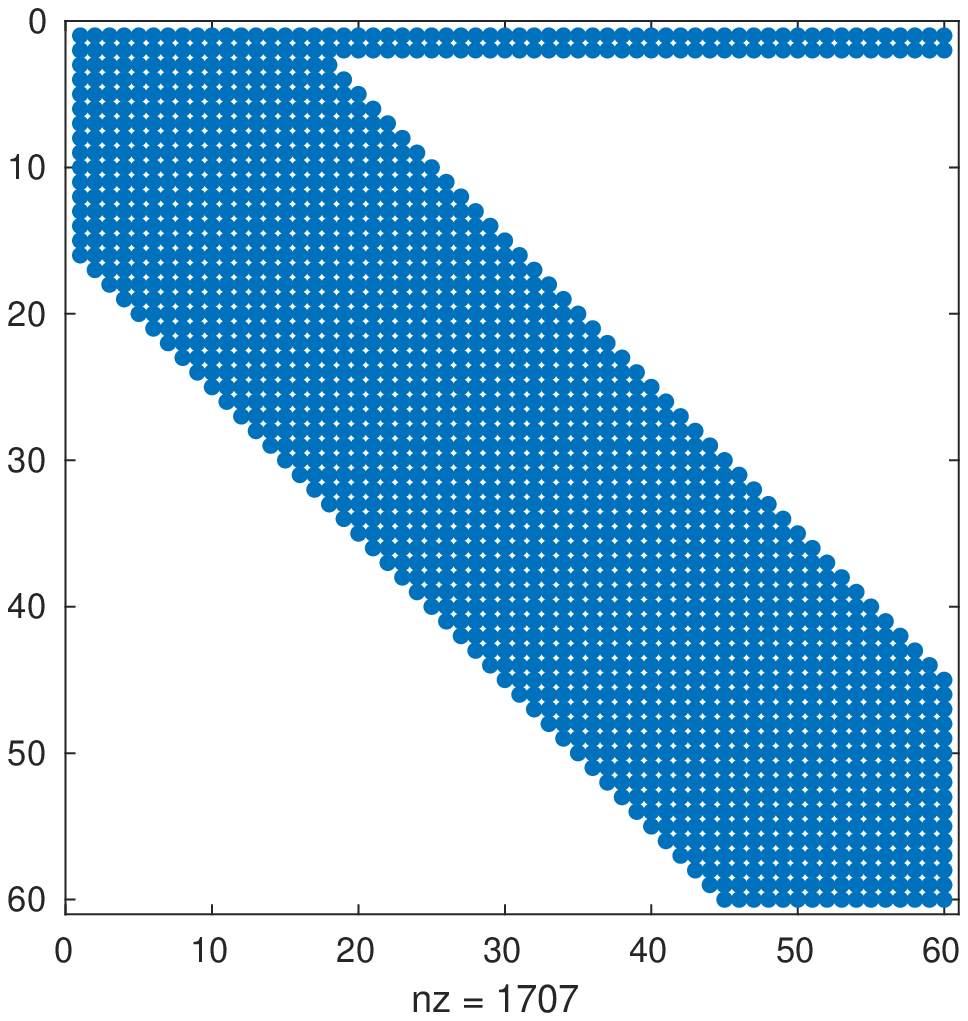}\hfill\hspace*{5pt}
 \includegraphics[height=105pt]{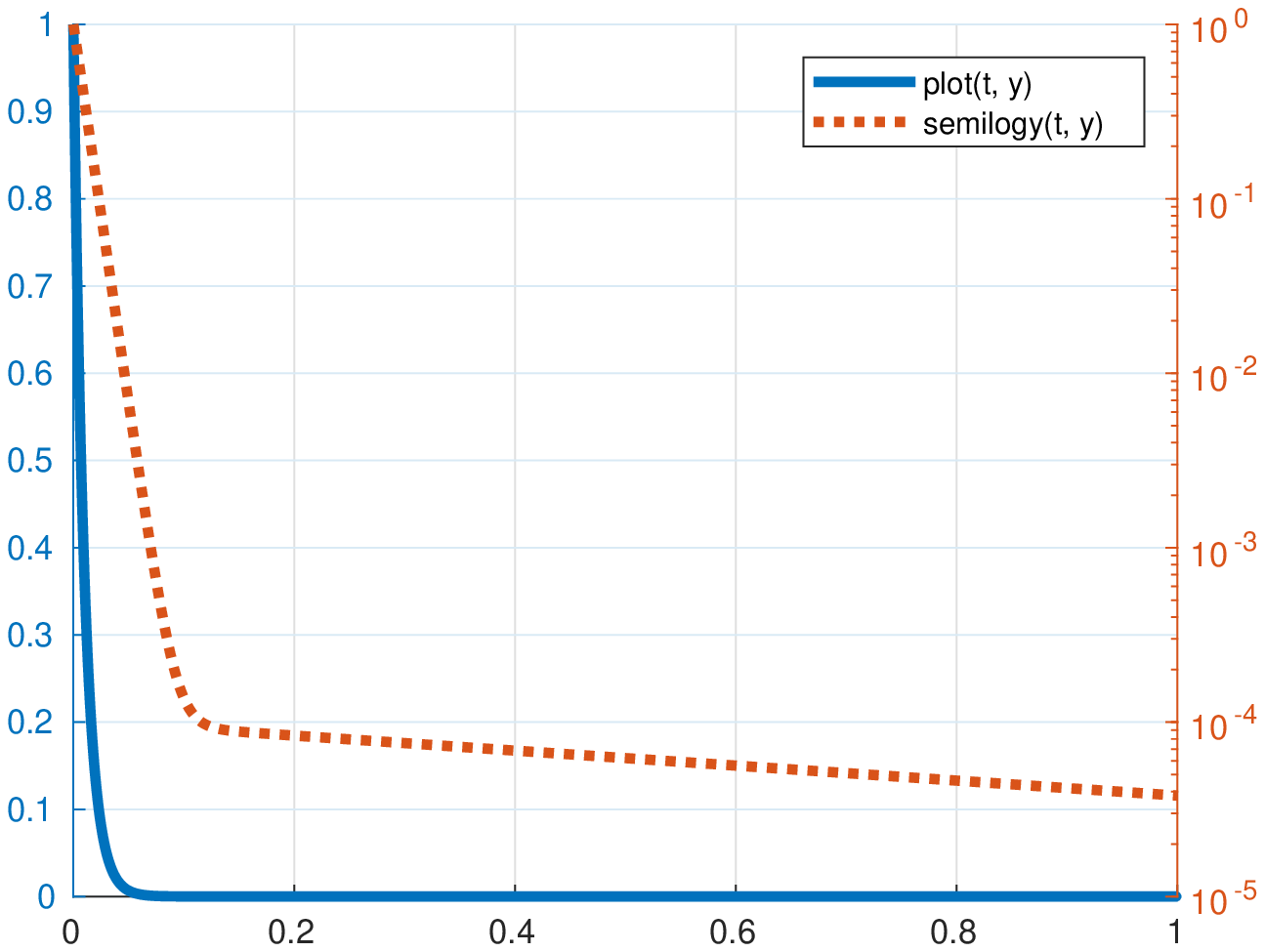}\hfill
 \includegraphics[height=105pt]{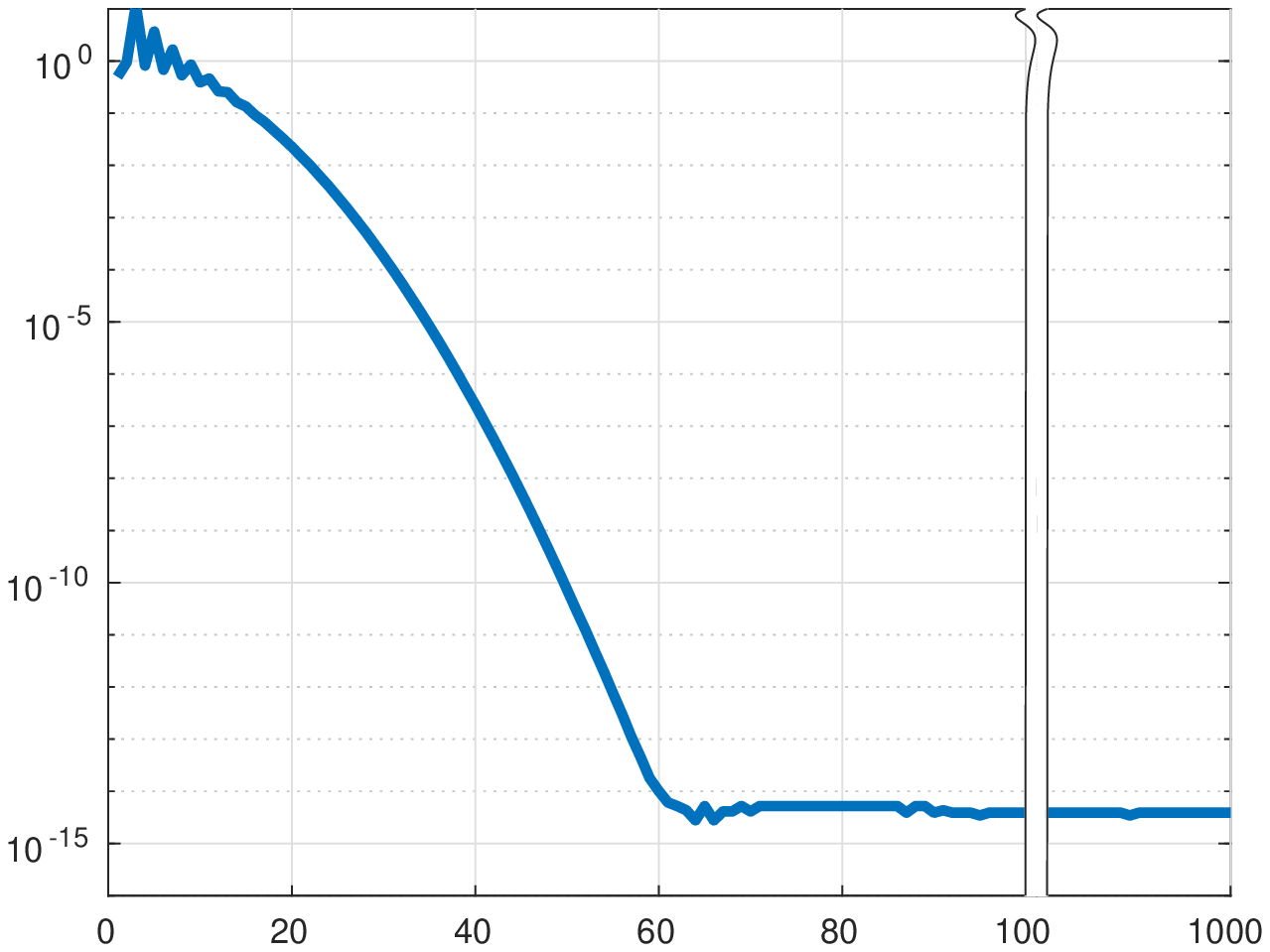}
 \put(-77,106){\scriptsize Error}
 \put(-72,-5){\scriptsize $N$}
 \put(-370,106){\scriptsize MATLAB {\tt spy} plot}
 \put(-238,106){\scriptsize Solution to~(\ref{eqn:ex7})}
 \put(-212,-5){\scriptsize $t$}
 \put(-291,48){\scriptsize \rotatebox{90}{$y(t)$}}
 \caption{As in Figure~\ref{fig:example5}, but here for the FIDECT~(\ref{eqn:ex7}). 
 The only significant difference in the discretisations is that the operator now has two dense rows at the top, because of the two boundary conditions.} \label{fig:example7}
 \end{figure}

\end{example}

\textbf{Remark:} Note that the splitting in~(\ref{eqn:fredop}) means we require only that the kernel is {\em piecewise} smooth on $[-T, 0]$ and $[0,T]$, not smooth on $[0,T]$. In particular, 
this allows kernels of the form $k(|t-s|)$ since here
\be\label{eqn:fredopabs}
\int_{0}^Tk(|t-s|)y(s)\, ds = \underbrace{\int_{0}^tk(t-s)y(s)\, ds}_{({\cal{V}}[k]y)(t) } + \underbrace{\int_{t}^Tk(s-t)y(s)\, ds}_{({\cal{V}}[\widehat k]y)(t) },
\ee
from which is follows that the discrete version of the operator is given by $F[\underline{k},\underline{k}] = V[\underline{k}] + \widetilde I V[\underline{k}] \widetilde I $.

\tldr{Fredholm operators can be written as the sum of a Volterra operator and a `flipped' Volterra, allowing the Legendre convolution operator to be used for Fredholm IDEs.}

\section{Further examples}\label{sec:examples}%
The two examples considered in the previous section were intentionally elementary. 
In fact, in both cases the rank-1 kernel, $k(t,s) = e^{-(t-s)}$, could be dealt with 
by a variant of the degenerate kernel approach described in Section~\ref{subsec:us}. In this section 
we consider some more challenging examples.

\begin{example}\label{ex:4}
Gaussian kernels arise often in Volterra and Fredholm integral equations with applications in filtering and scattering. 
These functions are not degenerate and such equations are therefore not so easily solved with standard techniques as those with, say, 
exponential kernels in the examples above. However, these kernels are smooth and of convolution type, so the method of this 
paper is readily applicable. To this end, consider the second-order Fredholm IDE given by 
\begin{equation}\label{eqn:ex4}
\xi^2y''(t) + ty'(t) + y(t) + \int_0^1 e^{-\frac{(t-s)^2}{2\sigma^2}}y(s)\, ds = f(t), \quad 0 \le t \le 1.
\end{equation}
For convenience we choose the function $f(t)$ on the right hand-side of~(\ref{eqn:ex4}) as 
\begin{equation}
 f(t) =  \frac{\xi\sigma}{r}\sqrt{\frac{\pi}{2}} e^{-\frac{t^2}{2r^2}} \left(\erf\left(\frac{\xi t}{\sigma r\sqrt{2}}\right) + \erf\left(\frac{r^2-\xi^2t}{\xi\sigma r\sqrt{2}}\right)\right),
\end{equation}
where $r = \sqrt{\sigma^2+\xi^2}$ and enforce the additional constraints 
\begin{equation}\label{eqn:ex4bcs}
y(0) = 1 \quad \text{and} \quad \int_0^1y(t) dt = \sqrt\frac{\pi}{2}\xi\erf\left(\frac{1}{\sqrt{2}\xi}\right) =: \gamma_\xi,
\end{equation}
so that the solution to~(\ref{eqn:ex4}) is given by 
\begin{equation}
 y(t) = e^{-\frac{t^2}{\xi^2}}.
\end{equation}

To discretise this FIDE, we first note that the mean-value constraint in~(\ref{eqn:ex4bcs}) can be enforced by observing that if $P_n(t)$ is a mapped-Legendre polynomial on $[0, T]$ then
\begin{equation}
 \int_{0}^TP_n(t)\,dt = \left\{\begin{array}{ll}T,& n=0,\\0, & \text{otherwise,}\end{array}\right.
\end{equation}
so that here we require $[1, 0, 0,\ldots ]\underline{y} = \gamma_\xi$.
The additional complication in this example is the non-constant coefficient in front of the $y'(t)$ term.
Since $D_{1/2}:\P\rightarrow\mathbf{C}^{3/2}$, the multiplication must take place in $\mathbf{C}^{3/2}$.
To this end, we construct the mapped-Legendre coefficients of $t$ on $[0, 1]$ (i.e., $\underline{t} = [\tfrac12, \tfrac12, 0, \ldots]$)
and convert these to coefficients in $\mathbf{C}^{3/2}$ by multiplying with the conversion operator $S_{1/2}$ (i.e., $S_{1/2}\underline{t} = [\tfrac12, \tfrac16, 0, \ldots]$.
The resulting multiplication operator $M_{3/2}[S_{1/2}\underline{t}]$ can then be constructed using the technique described in~\cite[Section 2.3]{hale2017}.
In this simple case it is given by
\be
M_{3/2}[S_{1/2}\underline{t}] = 
\tfrac{1}{2}
\begin{bsmallmatrix}
 1 \ & \frac{3}{5} \ \\
 \frac{1}{3}\ & 1\ & \frac{4}{7}\\
 & \frac{2}{5}\ & 1\ & \frac{5}{9}\\
   &  & \frac{3}{7}\ & 1\ & \ddots\\
   & & & \ddots & \ddots
\end{bsmallmatrix}
= \tfrac{1}{2}(I + J_{3/2}),
\ee
where $J_{3/2}$ is given by~\cite[(2.15)]{hale2017} with $\lambda = 3/2$. It remains then 
to compute the mapped-Legendre coefficients $\underline{f}$ of $f(t)$, $\underline{k}$ 
of the kernel $k(t) = e^{-{t^2}/{2\sigma^2}}$, and $\underline{\widehat k}$ 
of the flipped kernel $\widehat{k}(t) = e^{{t^2}/{2\sigma^2}}$
on the interval $[0, 1]$  so that
the discretised version of~(\ref{eqn:ex4}) and~(\ref{eqn:ex4bcs}) is given by
 \be\label{eqn:ex4spy}
 \begin{bmatrix} 
 \begin{bmatrix}
 1,&-1,&1,&-1,&1,&-1,&1,&-1,&-1,&\ldots\\
 1,&0,&0,&0,&0,&0,&0,&0,&0,&\ldots
 \end{bmatrix}\\
 D^2_{1/2} + S_{3/2}(M_{3/2}[S_{1/2}\underline{t}]D_{1/2}+S_{1/2}(I + F[\underline{k}, \underline{\widehat k}]))\end{bmatrix}\underline{y} = \begin{bmatrix}\begin{bmatrix}1\\\gamma_\xi\end{bmatrix}\\S_{3/2}S_{1/2}\underline{f}\vphantom{D^2_{1/2}}\end{bmatrix}.
 \ee
 
 Figure~\ref{fig:example4} shows the results when $\xi = {1}/{10}$ and $\sigma = 1$. As usual in the first panel we see that the 
 discretisation is almost-banded, here with only one dense row since the mean value constraint in~(\ref{eqn:ex4bcs}) is sparse.
 The bandwidth of the operator is determined by $\sigma$ in the Gaussian kernel; making $\sigma$ larger will
 result in a wider bandwidth, whilst decreasing it will make the bandwidth narrower. Conversely, the rate of convergence 
 (right-most figure) depends only on the value of $xi$; smaller $\xi$ will require a higher $N$ to resolve.%
\begin{figure}[t]
 \includegraphics[height=105pt, trim={0 15pt 0 0},clip]{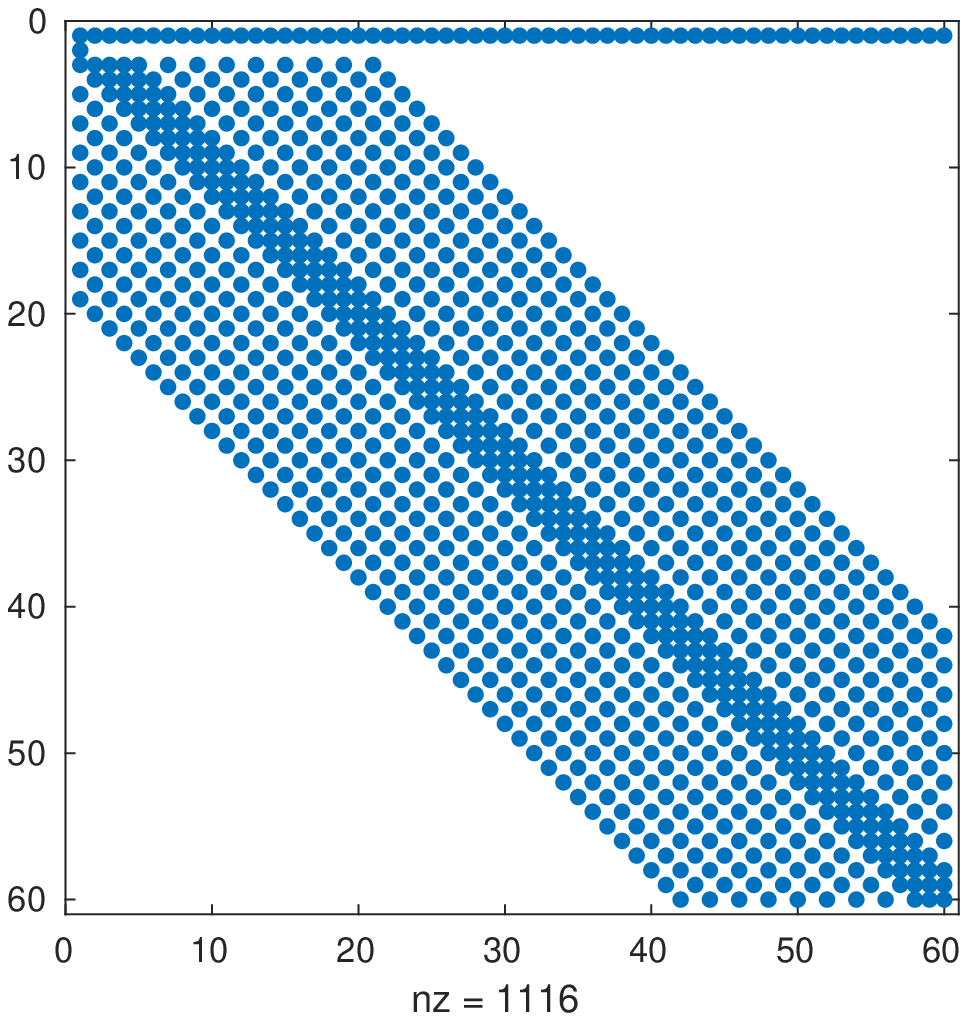}\hfill\hspace*{5pt}
 \includegraphics[height=105pt]{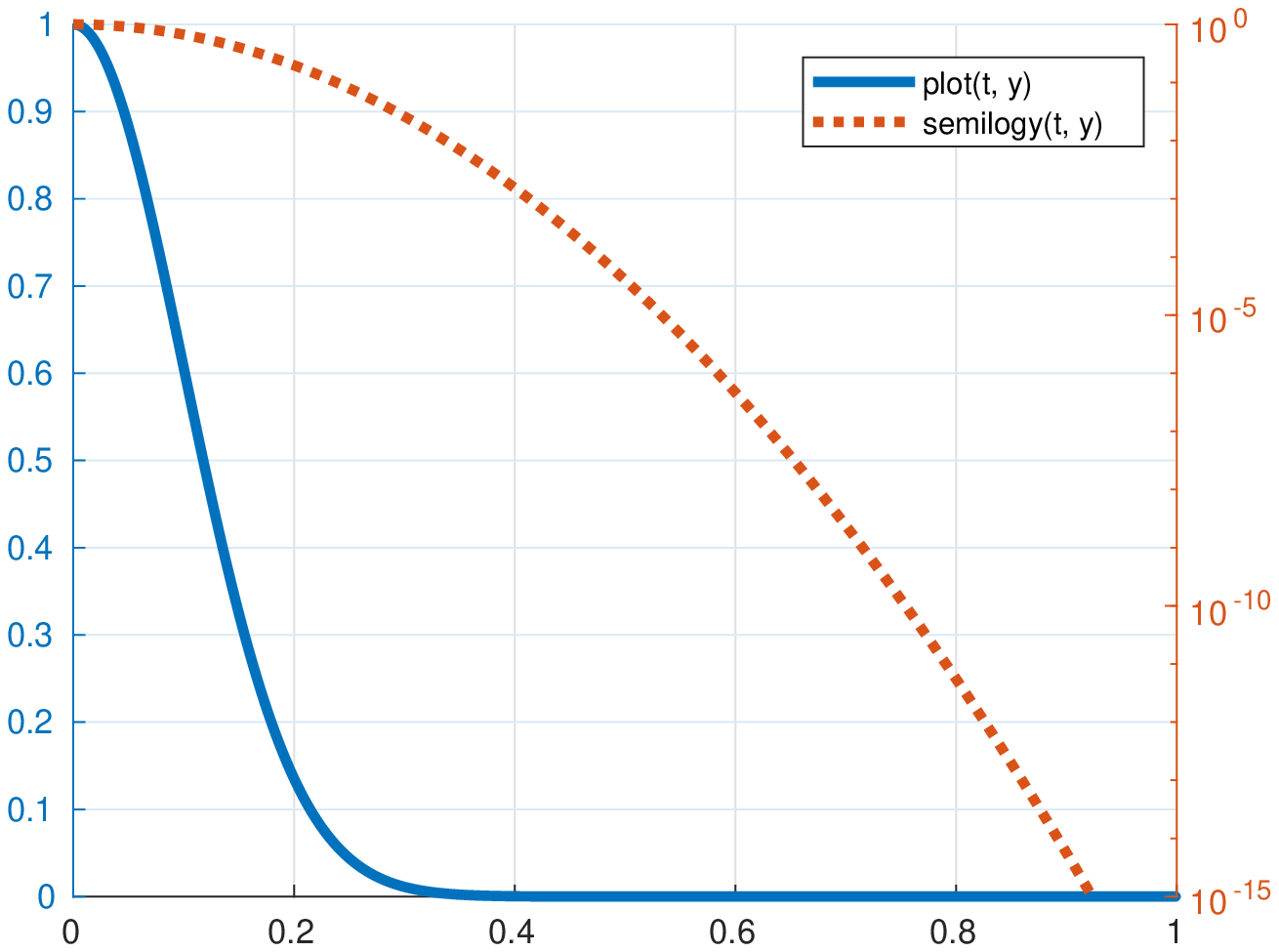}\hfill
 \includegraphics[height=105pt]{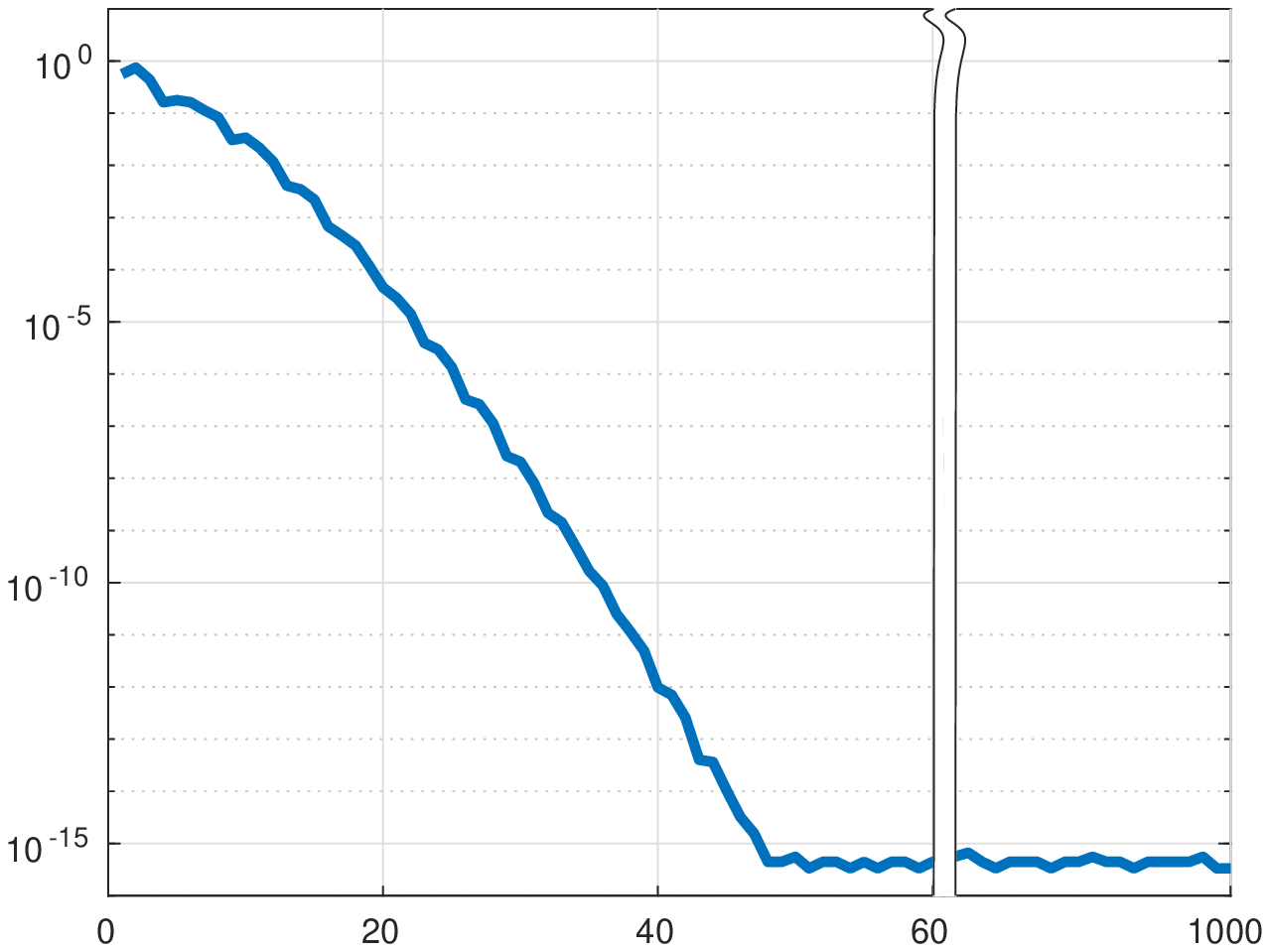}
 \put(-77,106){\scriptsize Error}
 \put(-72,-5){\scriptsize $N$}
 \put(-370,106){\scriptsize MATLAB {\tt spy} plot}
 \put(-238,106){\scriptsize Solution to~(\ref{eqn:ex4})}
 \put(-212,-5){\scriptsize $t$}
 \put(-291,48){\scriptsize \rotatebox{90}{$y(t)$}}
 \caption{Discretisation, solution, and convergence for the second-order FIDECT~(\ref{eqn:ex4}).
 As in the previous examples, the almost-banded structure and exponential convergence of the solution 
 are still obtained, even in the presence of non-constant coefficients and more general boundary conditions.} \label{fig:example4}
 \end{figure}

\end{example} 

\begin{example}
As our final example, for any $\omega\in\mathbb{R}$ and non-negative integer $\mu>0$, consider the second-order VIDECT
\begin{equation}\label{eqn:ex13}
  y''(t) + \omega^2y(t) = f(t) -\omega\int_0^t J_\mu(\omega(t - s))y(s)\,ds  , \quad 0\le t \le T, \quad y(0) = y'(0) = 0,
\end{equation}
where $f(t)$ on the right hand-side is taken as 
\begin{equation}
  f(t) = J_{\mu+\eta}(\omega t) + \frac{1}{2t^2}\big((\eta-1)(\eta-2)J_{\eta-1}(\omega t)+(\eta+1)(\eta+2)J_{\eta+1}(\omega t)\big)
\end{equation}
so that for any integer $\eta \ge 3$ the exact solution is given by\footnote{In fact, if the initial conditions are changed to $y(0) = {\delta_{1,\eta}}/{2}, \, y'(0) = {\omega\delta_{2,\eta}}/{4}$
then the result holds also for $\eta = 1$ and $\eta = 2$. The solution to this VIDECT was reverse engineered using the 
differential equation satisfied by Bessel function~\cite[10.2.1]{DLMF} and the known convolution of two Bessel functions~\cite[10.22.34]{DLMF}.}
\begin{equation}
  y(t) = \eta J_\eta(\omega t)/(\omega t).
\end{equation}
Integral equations with Bessel function kernels arise often in the fields of scattering and potential theory~\citep{xiang2013},
and since $J_\mu(\omega(t - s))$ is not degenerate, the Volterra operator in~(\ref{eqn:ex13}) cannot trivially be written 
as a linear combination of multiplication and integration operators.

We proceed as before, discretising the differential operator ${\cal{D}}^2+\omega^2{\cal{I}}$ using the US method for ODEs, 
and the convolution-type Volterra operator ${\cal{V}}[J_\mu(\omega t)]$ using the approach outlined in Section~\ref{subsec:convop}.
The Dirichlet boundary condition $y(0)= 0 $ can be enforced as in~(\ref{eqn:ex5spy}), and for the Neumann condition $y'(0)= 0 $ 
we have that
\begin{equation}
  y'(0) = \P'(0)\underline{y} = T\begin{bmatrix}0,&2,&-6,&12,&-20,&\ldots\end{bmatrix}\underline{y} = 0,
\end{equation}
The full discretisation of~(\ref{eqn:ex13}) on the interval $[0, 1]$ is then given by 
 \be\label{eqn:ex13spy}
 \begin{bmatrix} 
 \begin{bmatrix}
 1,&-1,&1,&-1,&1,&\ldots\\
 0,&2,&-6,&12,&-20,&\ldots
 \end{bmatrix}\\
 D^2_{1/2} + S_{3/2}S_{1/2}(\omega^2I + \omega V[\underline{J}_\mu\!(\omega t)])\end{bmatrix}\underline{y} = \begin{bmatrix}\begin{bmatrix}0\\0\end{bmatrix}\\\underline{f}\vphantom{D^2_{1/2}}\end{bmatrix}.
 \ee
\begin{figure}[t]
 \includegraphics[height=105pt, trim={0 15pt 0 0},clip]{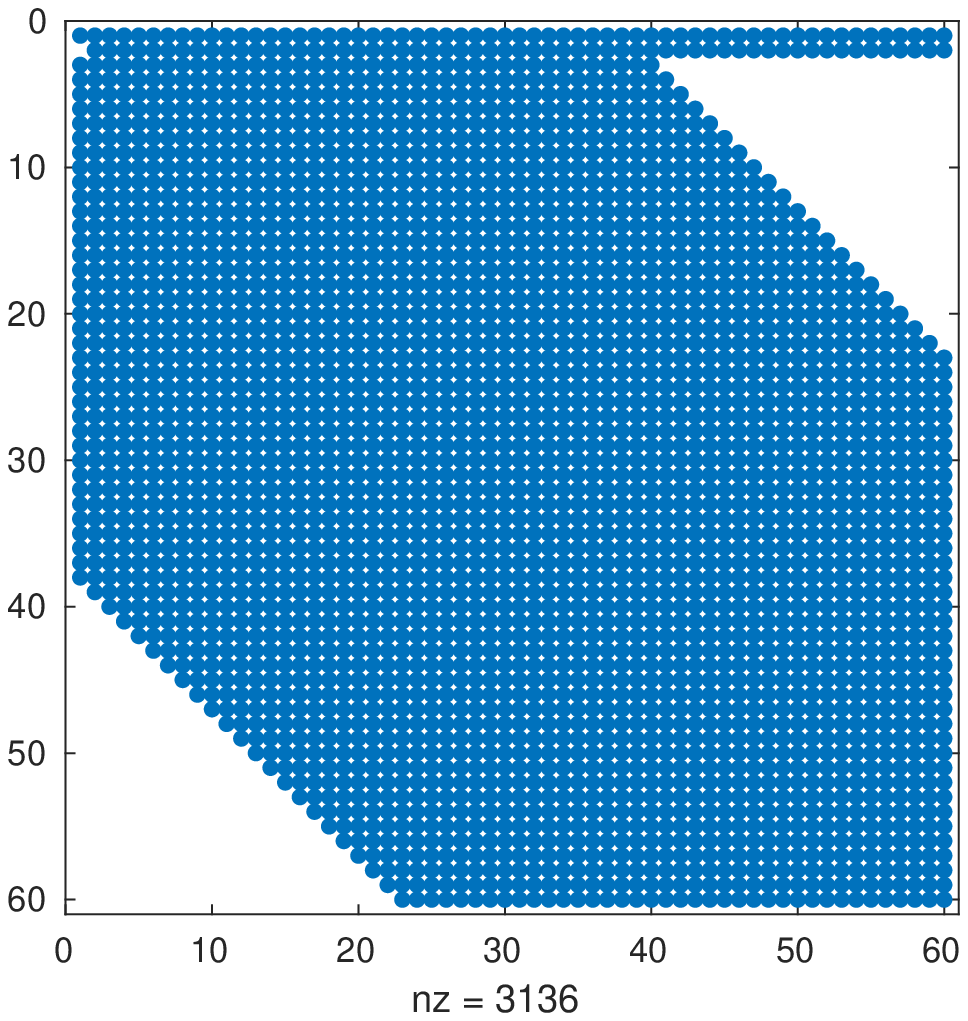}\hfill\hspace*{5pt}
 \includegraphics[height=105pt]{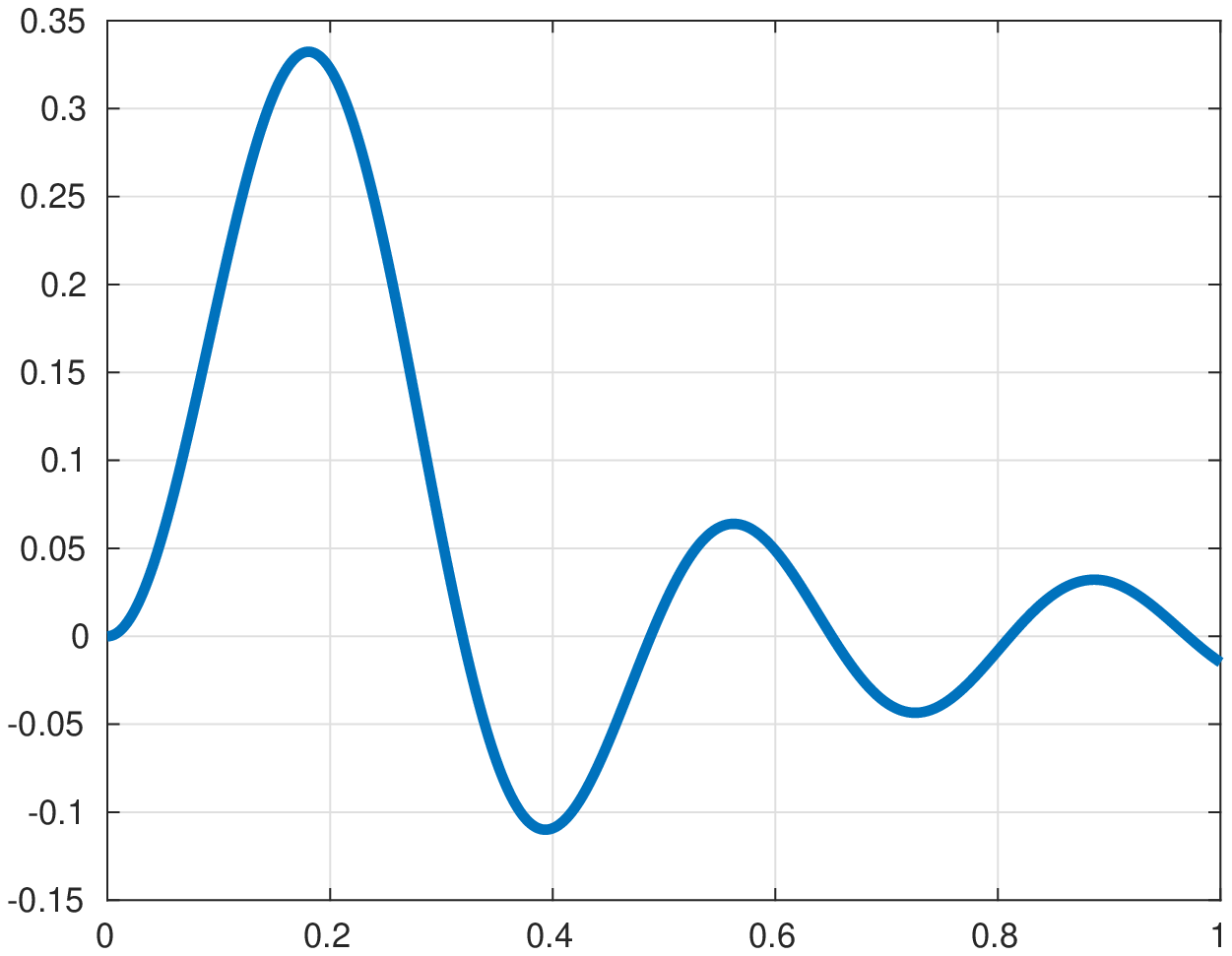}\hfill
 \includegraphics[height=105pt]{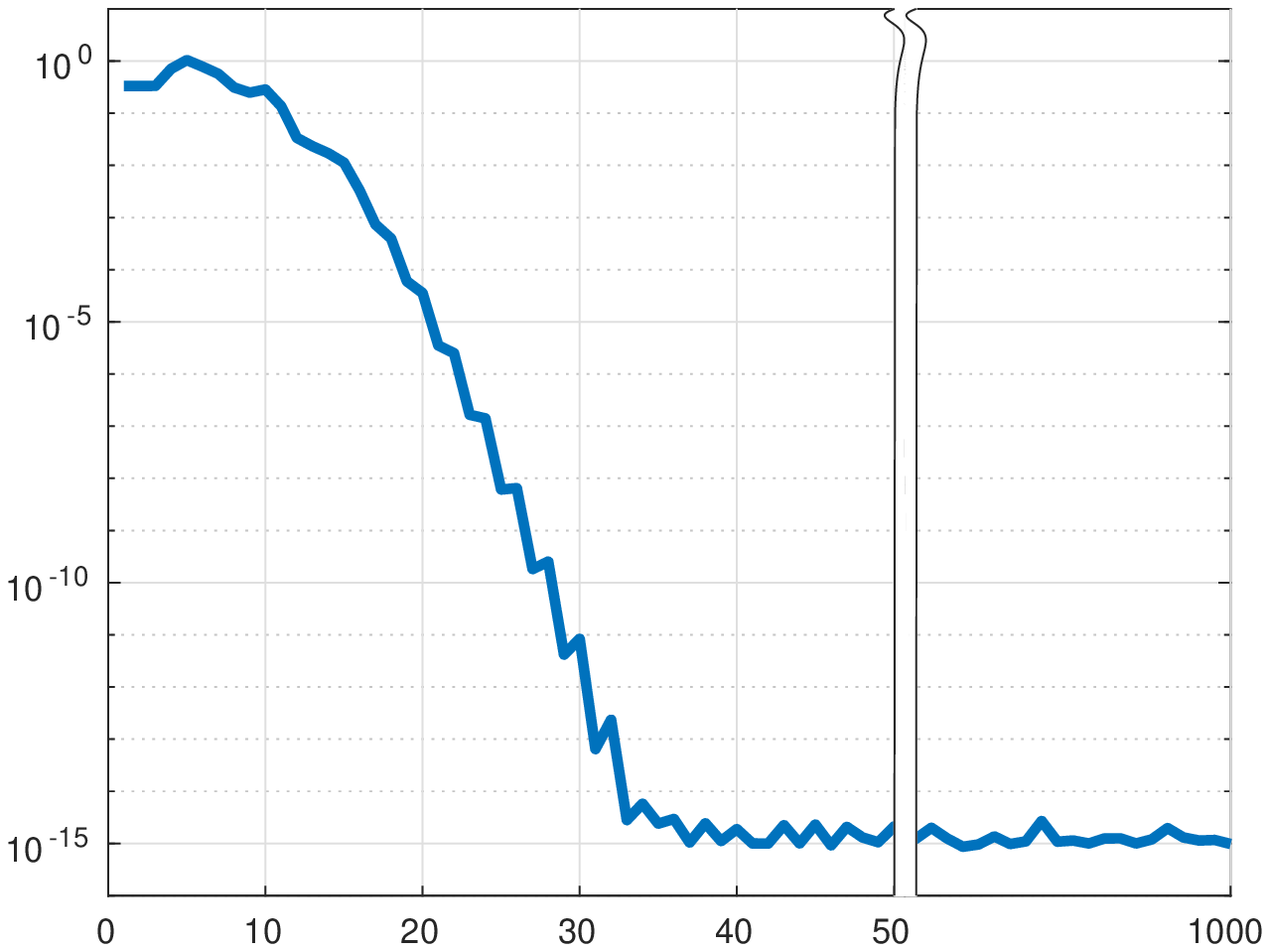}
\put(-77,106){\scriptsize Error}
 \put(-70,-5){\scriptsize $N$}
 \put(-370,106){\scriptsize MATLAB {\tt spy} plot}
 \put(-228,106){\scriptsize Solution to~(\ref{eqn:ex13})}
 \put(-206,-5){\scriptsize $t$}
 \put(-291,48){\scriptsize \rotatebox{90}{$y(t)$}}
 \caption{Discretisation, solution, and convergence for the second-order VIDECT~(\ref{eqn:ex13}) with $\nu = 3$, $\mu = 2$, and $\omega = 20$.
 Geometric convergence is obtained (right-most panel) to the Bessel function-like solution (centre panel).
 However, although almost-banded as $N\rightarrow\infty$, this structure is not utilised in practice for this
 example since the frequency of the solution is the same as that kernel (left-most panel). One can construct am
 example for which this isn't the case by adding a singular perturbation to the left hand-side (see Figure~\ref{fig:example13b}).} \label{fig:example13}
\end{figure}

Figure~\ref{fig:example13} shows the discretisation, solution, and error when solving~(\ref{eqn:ex13}) with $\eta = 3, \mu = 2$, and $\omega = 20$.
As in the previous examples, the first panel shows the almost-banded structure of the discretisation,
here with two dense rows arising from the two initial conditions. The bandwidth is determined by
the frequency $\omega$ in the kernel which, by construction, is the same as the frequency of the solution, and so the banded 
nature of the operator (in the limit as $N\rightarrow\infty$) is not utilized in practice for this example. 
However, one can readily imagine related problems which require larger values of $N$ to solve (for example, 
introducing small perturbation parameter in front of the $y''$ term or adding an oscillatory or less smooth term to $f(t)$)
that will not affect the structure of the operator. Figure~\ref{fig:example13b} shows such a situation, 
where the left hand-hand side of~(\ref{eqn:ex13}) is modified to $0.001 y''(t) + \omega^2y(t)$.
This singular perturbation causes the solution to become far more oscillatory, and hence far more degrees
of freedom are required to resolve it. In this case, the banded nature of the discretisation is evident, and utilised.
In both cases, exponential convergence and high accuracy is observed to around the level of machine precision.%
\begin{figure}[t]
 \includegraphics[height=105pt, trim={0 15pt 0 0},clip]{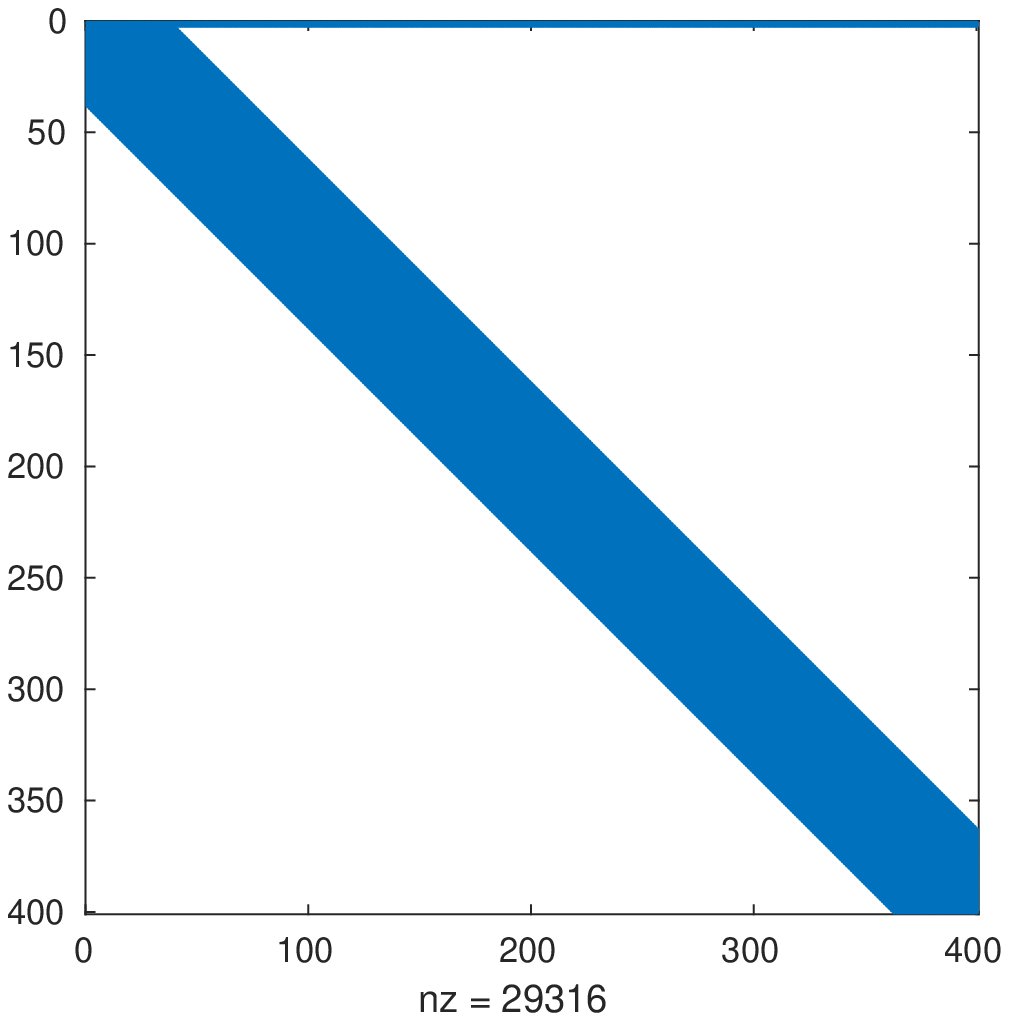}\hfill\hspace*{5pt}
 \includegraphics[height=105pt]{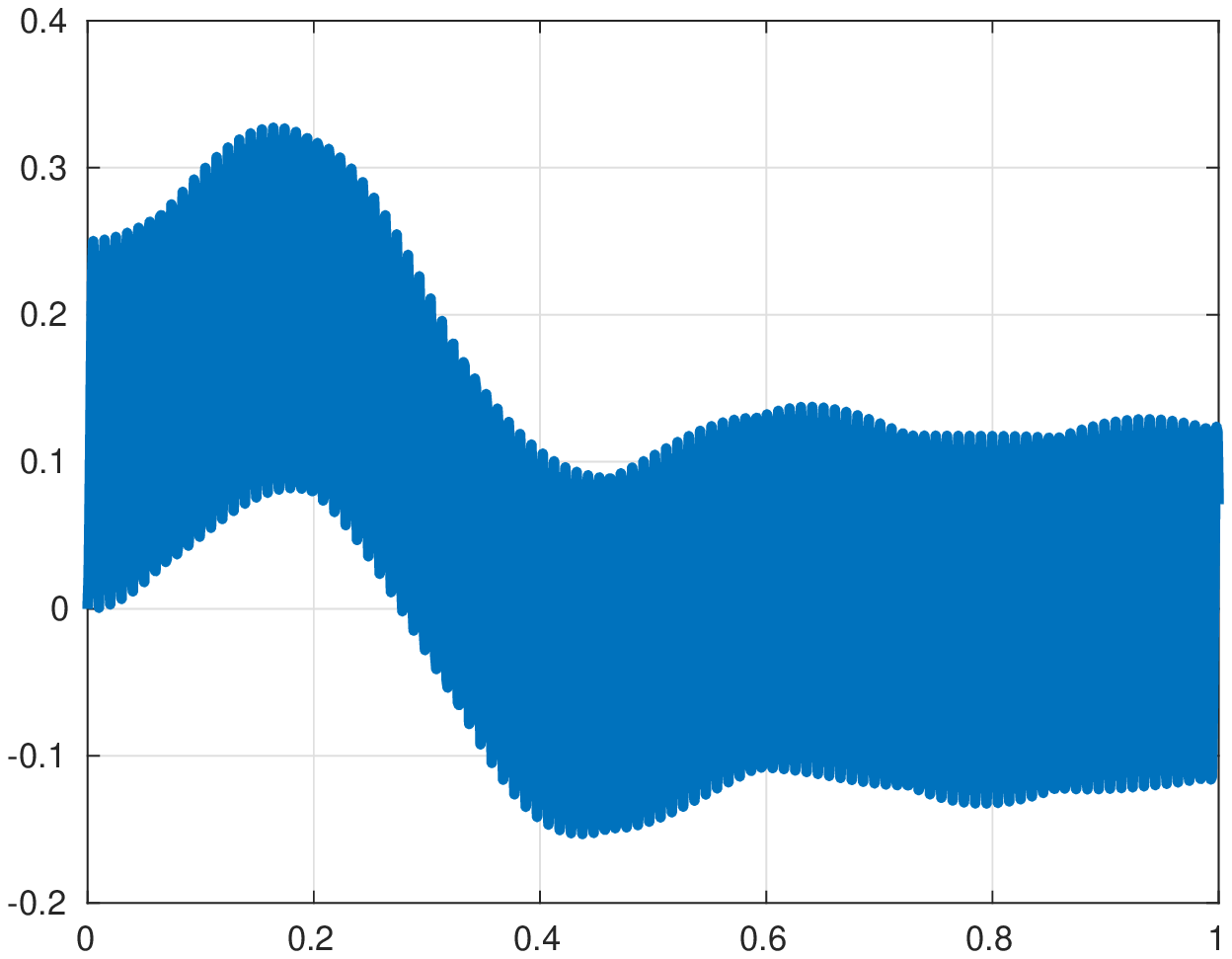}\hfill
 \includegraphics[height=105pt]{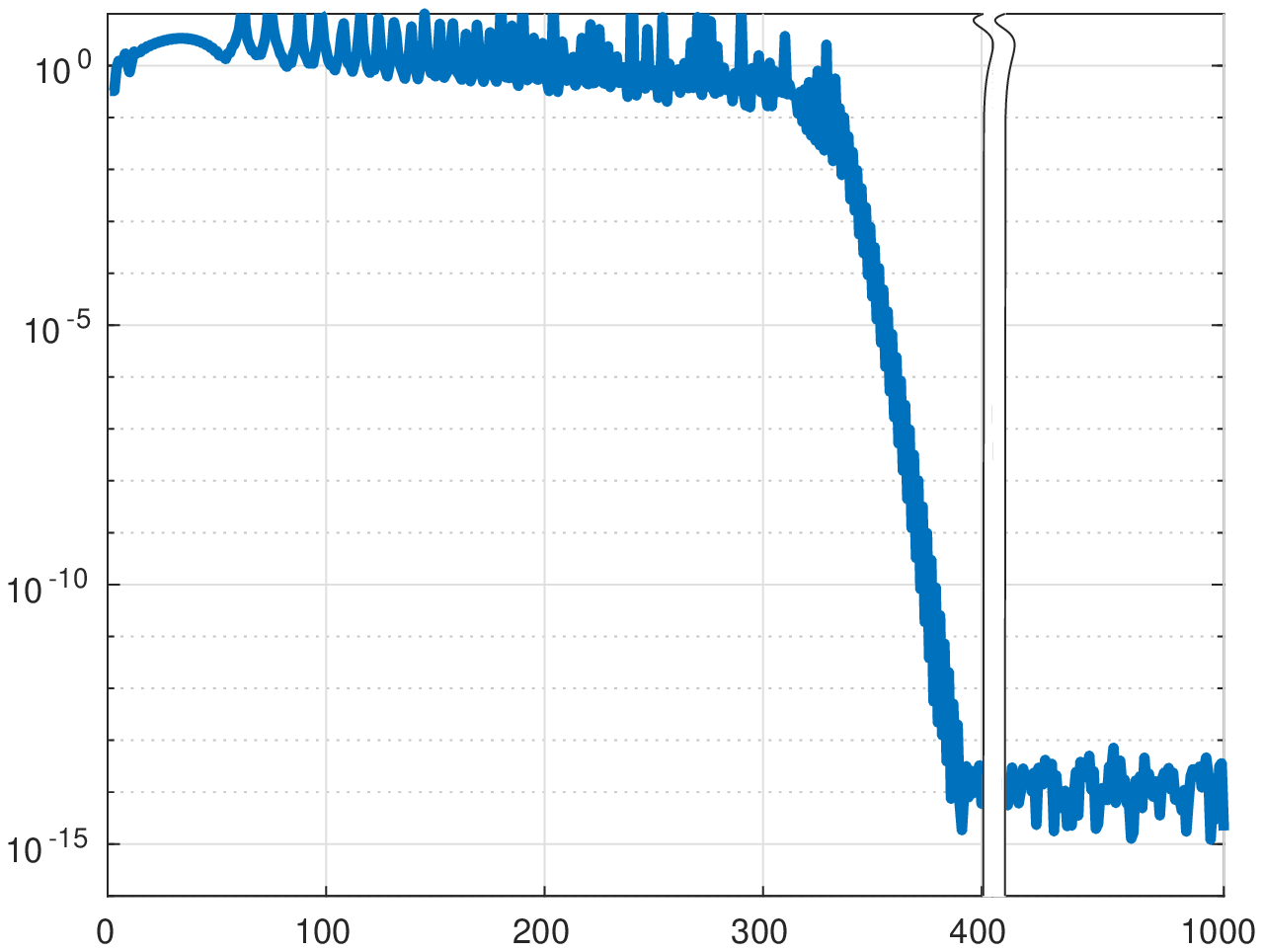}
\put(-77,106){\scriptsize Error}
 \put(-70,-5){\scriptsize $N$}
 \put(-370,106){\scriptsize MATLAB {\tt spy} plot}
 \put(-242,106){\scriptsize Solution to modified~(\ref{eqn:ex13})}
 \put(-206,-5){\scriptsize $t$}
 \put(-291,48){\scriptsize \rotatebox{90}{$y(t)$}}
 \caption{As in Figure~\ref{fig:example13} but with the left hand-side of~(\ref{eqn:ex13}) modified to $0.001 y''(t) + \omega^2y(t)$.
 The singular perturbation makes the solution far more oscillatory, and the banded nature of the discretisation is utilised.
 Here the underlying solution is not known, so the error is computed against a discretisation with $N=2000$.
 We see in the right-most panel that, once the oscillations are resolved, convergence is again geometric.} \label{fig:example13b}
 \end{figure}
 \end{example}
%
\section{Conclusion}\label{sec:conc}%
We have shown that the linear operators representing the 
action of Fredholm and Volterra integral operators on Legendre
series expansions can be combined with the Legendre-ultraspherical spectral
method for ordinary differential equations to efficiently solve 
integro-differential equations with convolution-type kernels. 
It was demonstrated that if the kernel, coefficient functions, and right-hand side 
are sufficiently smooth (analytic) then the rate of convergence is very fast (geometric).
If the kernel and coefficient functions are polynomials (or sufficiently
well-approximated by a a polynomial) then the discretisations result in 
banded or almost-banded matrices which can be solved with linear complexity.

Volterra equations typically correspond to initial value problems, 
and hence most numerical techniques use some form of time-marching.
However, the geometric convergence and linear complexity of the 
algorithm presented in this paper make it competitive with 
such approaches in many cases. Fredholm integral operators are like 
boundary-value problems and hence a global discretisation is more natural.

We presented here the simplest version of this algorithm, namely linear
problems in one dimension. The approach readily extends to 
time-dependent problems, systems of equations, and higher-dimensions, analogously to the
ultraspherical method for differential equations~\citep{olver2014}. Nonlinear
problems may be solved by a Newton--Raphson-like approach, but
in such situations the banded structure (although not the well-conditioned)
property of the discretisation is lost. A linear complexity method
for nonlinear problems, even for ODES, remains elusive.
\section*{Acknowledgements}
I am grateful to  Sheehan Olver (Imperial College), Alex Townsend (Cornell), and Andr\'e Weideman (Stellenbosch University) for useful discussions.

\bibliographystyle{IMANUM-BIB}
\bibliography{Hale2017}

\end{document}